\documentclass{amsart}

\usepackage{graphicx,psfrag,url}

\newtheorem{proposition}{Proposition}[section]
\newtheorem{theorem}[proposition]{Theorem}
\newtheorem{lemma}[proposition]{Lemma}

\newtheorem{corollary}[proposition]{Corollary}

\DeclareMathOperator{\vol}{Vol}

\title{A strong triangle inequality in hyperbolic geometry}

\author[C. Bir\'o]{Csaba Bir\'o}
\address{Department of Mathematics, University of Louisville, Louisville, KY 40292}
\email{csaba.biro@louisville.edu}

\author[R. C. Powers]{Robert C. Powers}
\address{Department of Mathematics, University of Louisville, Louisville, KY 40292}
\email{robert.powers@louisville.edu}

\begin{document}

\begin{abstract}
For a triangle in the hyperbolic plane, let $\alpha,\beta,\gamma$ denote the angles opposite the sides $a,b,c$, respectively. Also, let $h$ be the height of the altitude to side $c$. Under the assumption that $\alpha,\beta, \gamma$ can be chosen uniformly in the interval $(0,\pi)$ and it is given that $\alpha+\beta+\gamma<\pi$, we show that the strong triangle inequality $a + b > c + h$ holds approximately 79\% of the time. To accomplish this, we prove a number of theoretical results to make sure that the probability can be computed to an arbitrary precision, and the error can be bounded.
\end{abstract}

\maketitle

\section{Introduction}

It is well known that the Euclidean and hyperbolic planes satisfy the triangle inequality. What is less known is that in many cases a stronger triangle inequality holds. Specifically,
\begin{equation} \label{eq:sti}
a + b > c + h
\end{equation}
where $a,b,c$ are the lengths of the three sides of the triangle and $h$ is the height of the altitude to side $c$. We refer to inequality (\ref{eq:sti}) as the {\em strong triangle inequality} and note that this inequality depends on which side of the triangle is labeled $c$.

The strong triangle inequality was first introduced for the Euclidean plane by Bailey and Bannister in \cite{Bai-Ban-97}. They proved, see also Klamkin \cite{Kla-98}, that inequality (\ref{eq:sti}) holds for all Euclidean triangles if $\gamma < \arctan \left (\frac{24}{7}\right)$ where $\gamma$ is the angle opposite side $c$. Bailey and Bannister also showed that $a + b = c + h$ for any Euclidean isosceles triangle such that $\gamma = \arctan \left (\frac{24}{7}\right)$ and $\gamma$ is the unique largest angle of the triangle. We let $B = \arctan \left (\frac{24}{7}\right)$ and refer to $B$ as the Bailey-Bannister bound.

In 2007, Baker and Powers \cite{Bak-Pow-07} showed that the strong triangle inequality holds for any hyperbolic triangle if $\gamma \leq \Gamma$ where $\Gamma$ is the unique root of the function
\[ f(\gamma) = -1 - \cos \gamma + \sin \gamma + \sin \frac{\gamma}{2} \sin \gamma\]
in the interval $[0, \frac{\pi}{2}]$. It turns out that $B \approx 74^{\circ}$ and $\Gamma \approx 66^{\circ}$ leading to roughly an $8^{\circ}$ difference between the Euclidean and hyperbolic bounds. It appears that the strong triangle inequality  holds more often in the Euclidean plane than in the hyperbolic plane.

Let $\alpha$ and $\beta$ denote the angles opposite the sides $a$ and $b$, respectively. Under the assumption that the angles $\alpha$ and $\beta$ can be chosen uniformly in the interval $(0, \pi)$ and $\alpha + \beta < \pi$, Fa{\u\i}ziev et al. \cite{Fai-Pow-Sah-13} showed the strong triangle inequality holds in the Euclidean plane approximately 69\% of the time. In addition, they asked how this percentage will change when working with triangles in the hyperbolic plane. In this paper, we answer this question by showing that the strong triangle inequality holds approximately 79\% of the time. Moreover, we show that the stated probability can be computed to an arbitrary precision and that the error can be bounded.

Unless otherwise noted, all geometric notions in this paper are on the
hyperbolic plane. Since our problem is invariant under scaling, we will assume
that the Gaussian curvature of the plane is $-1$. We will use the notations $a$, $b$, $c$, $h$, $\alpha$, $\beta$, $\gamma$ for sides, height, and angles of a given triangle. (See Figure~\ref{fig:triangle}.)
\begin{figure}
\psfrag{a}{$a$}
\psfrag{b}{$b$}
\psfrag{c}{$c$}
\psfrag{al}{$\alpha$}
\psfrag{be}{$\beta$}
\psfrag{ga}{$\gamma$}
\psfrag{h}{$h$}
\includegraphics[scale=0.8]{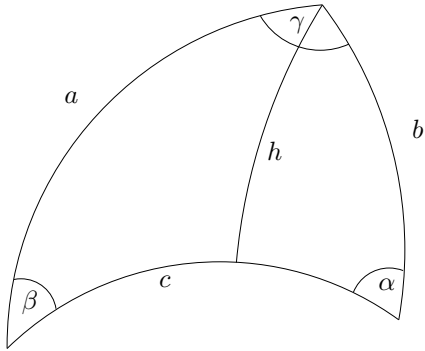}
\caption{A hyperbolic triangle\label{fig:triangle}}
\end{figure}
We will extensively use
hyperbolic trigonometric formulas such as the law of sines and the two versions of the
law of cosines. We refer the reader to Chapter 8 in \cite{Sta-TPHP} for a list of these various formulas.

\section{Simple observations}\label{section:observations}

In this section we mention a few simple, but important observations about the
main question.

\begin{proposition}\label{proposition:greatest}
If $\gamma$ is not the unique greatest angle in a triangle, then the strong triangle
inequality holds.
\end{proposition}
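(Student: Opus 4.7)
My plan is to show that whenever $\gamma$ fails to be the unique greatest angle, one of the sides $a,b$ already dominates $c$, and the other already dominates $h$, so that the desired inequality follows by simple addition.

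First I would unpack the hypothesis: if $\gamma$ is not the unique greatest angle, then $\alpha \geq \gamma$ or $\beta \geq \gamma$, and by the symmetry $a\leftrightarrow b$, $\alpha\leftrightarrow\beta$ we may assume $\alpha\geq\gamma$. Next I would invoke the standard monotonicity fact in hyperbolic triangles (a direct consequence of the hyperbolic law of sines $\frac{\sinh a}{\sin\alpha}=\frac{\sinh c}{\sin\gamma}$, combined with the usual angle-chasing to rule out the ``supplementary'' alternative): the greater of two angles is opposite the greater of the corresponding sides. Hence $\alpha\geq\gamma$ gives $a\geq c$.

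Then I would argue $b>h$. Let $F$ be the foot of the perpendicular from $C$ to the line through $AB$. Regardless of whether $F$ lies on segment $AB$ or on its extension, the triangle $AFC$ is a right triangle with right angle at $F$, hypotenuse $b=AC$, and one leg equal to $h=CF$; in hyperbolic (as in Euclidean) geometry the hypotenuse of a nondegenerate right triangle strictly exceeds either leg, so $b>h$. Adding the two inequalities yields
\[
a+b \;\geq\; c+b \;>\; c+h,
\]
which is the strong triangle inequality.

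The argument is almost entirely bookkeeping, so there is no real obstacle; the only point requiring a moment of care is the justification that the altitude-leg comparison $b>h$ is valid in the hyperbolic setting even when $\alpha$ is obtuse and $F$ lies outside segment $AB$, but since we only need $AFC$ to be a right triangle with hypotenuse $b$, this case causes no problem.
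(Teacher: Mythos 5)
Your approach is the same as the paper's: from $\alpha\geq\gamma$ deduce $a\geq c$, compare $b$ with the altitude $h$, and add. However, there is one edge case where your argument as written breaks down: you claim $b>h$ \emph{strictly} by viewing $b$ as the hypotenuse of the right triangle $AFC$, where $F$ is the foot of the altitude from $C$. This fails precisely when $\alpha=\pi/2$, for then $F=A$, the triangle $AFC$ degenerates, and $h=b$ exactly. You anticipated the case where $F$ lies outside the segment $AB$, but not the case where $F$ coincides with the endpoint $A$. Since $\alpha=\pi/2$ is perfectly compatible with the hypothesis $\alpha\geq\gamma$, this case must be addressed; otherwise your chain only yields $a+b\geq c+h$.

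The repair is immediate, and it is exactly what the paper does: equality in $a+b\geq c+b\geq c+h$ would require both $a=c$ (forcing $\alpha=\gamma$) and $b=h$ (forcing $\alpha=\pi/2$), hence two right angles $\alpha=\gamma=\pi/2$, which is impossible in any triangle, let alone a hyperbolic one. Alternatively, in your framework: if $\alpha=\pi/2$ then $\gamma<\pi/2=\alpha$ strictly, so $a>c$ strictly and $a+b>c+b=c+h$; if $\alpha\neq\pi/2$ your right-triangle argument gives $b>h$ and you are done. Either patch completes the proof.
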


\begin{proof}
Suppose that $\gamma$ is not the greatest angle, say, $\alpha\geq\gamma$. Then
$a\geq c$, so $a+b\geq c+b\geq c+h$. Equality could only hold, if
$\alpha=\gamma=\pi/2$, which is impossible.
\end{proof}

\begin{proposition}\label{proposition:obtuse}
If $\gamma\geq\pi/2$, then the strong triangle inequality does not hold.
\end{proposition}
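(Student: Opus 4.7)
The plan is to prove the stronger statement $\cosh(c+h)\ge\cosh(a+b)$; since $\cosh$ is strictly increasing on $[0,\infty)$, this forces $c+h\ge a+b$, and hence the strong triangle inequality fails.

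First I would drop the altitude from $C$ to side $c$. Under the hypothesis $\gamma\ge\pi/2$ we have $\alpha+\beta<\pi-\gamma\le\pi/2$, so both $\alpha$ and $\beta$ are acute and the foot of the altitude lies strictly inside $AB$. This splits $c$ into two positive segments $c_1,c_2$ with $c_1+c_2=c$, and the hyperbolic Pythagorean theorem applied to each right sub-triangle gives $\cosh a=\cosh c_1\cosh h$ and $\cosh b=\cosh c_2\cosh h$. A short calculation with the hyperbolic law of cosines then shows that the hypothesis $\cos\gamma\le 0$ is equivalent to the clean inequality
\[
\sinh c_1\sinh c_2\;\ge\;\cosh c_1\cosh c_2\,\sinh^2 h.
\]

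Next, setting $P=\cosh c_1\cosh c_2$, $Q=\sinh c_1\sinh c_2$, $C=\cosh h$, and $S=\sinh h$, I would expand both sides of the target inequality. Using $\sinh^2 a=\sinh^2 c_1+\cosh^2 c_1\,\sinh^2 h$ and the analogous expression for $\sinh^2 b$, together with $\sinh c=\sinh c_1\cosh c_2+\cosh c_1\sinh c_2$, one arrives at the key algebraic identity
\[
(\sinh a\sinh b)^2\;=\;(\sinh c\sinh h)^2+(Q-PS^2)^2.
\]
Applying the addition formula for $\cosh$ to $\cosh(c+h)$ and $\cosh(a+b)$ and using this identity to collapse the resulting square root, a short rearrangement yields
\[
\cosh(c+h)-\cosh(a+b)\;=\;PC^2(C-1)+yC+x-\sqrt{x^2+y^2},
\]
where $x=\sinh c\sinh h\ge 0$ and $y=Q-PS^2\ge 0$ by the reformulated hypothesis. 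The first term is nonnegative since $C\ge 1$, and $yC+x\ge y+x\ge\sqrt{x^2+y^2}$ handles the rest, with strict inequality whenever $h>0$.

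The main obstacle is finding the regrouping of terms that exposes the hypothesis: the condition $\gamma\ge\pi/2$ only enters once the identity $(\sinh a\sinh b)^2=(\sinh c\sinh h)^2+(Q-PS^2)^2$ is in hand, because this is precisely what lets $Q-PS^2$ appear as a nonnegative quantity inside the difference $\cosh(c+h)-\cosh(a+b)$. After that, the remainder of the argument is just the elementary inequality $\sqrt{x^2+y^2}\le x+y$ for $x,y\ge 0$.
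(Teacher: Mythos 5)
Your proof is correct, but it takes a genuinely different route from the paper's. The paper works entirely with the law of cosines and the identity $\sinh c\sinh h=\sinh a\sinh b\sin\gamma$ (its Lemma~\ref{lemma:area}), reducing the claim to the one-line observation that $\sin\gamma-\cos\gamma\ge 1$ on $[\pi/2,\pi]$ together with $\cosh h\ge 1$; it is about four lines long. You instead decompose the triangle along the altitude into two right triangles, use the hyperbolic Pythagorean theorem $\cosh a=\cosh c_1\cosh h$, $\cosh b=\cosh c_2\cosh h$, translate $\cos\gamma\le 0$ into $Q\ge PS^2$ (which checks out: it is exactly $\cosh c\ge\cosh a\cosh b$), and then exploit the identity $(\sinh a\sinh b)^2=(\sinh c\sinh h)^2+(Q-PS^2)^2$ — which I verified by direct expansion — to finish with $\sqrt{x^2+y^2}\le x+y$. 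The regrouping $\cosh(c+h)-\cosh(a+b)=PC^2(C-1)+yC+x-\sqrt{x^2+y^2}$ is also an identity (using $S^2=C^2-1$), so every step is sound, and you even get strict failure $a+b<c+h$ since $\cosh h>1$, slightly more than the paper's non-strict bound. What the paper's approach buys is brevity and reuse of Lemma~\ref{lemma:area}, which it needs elsewhere anyway; what yours buys is a self-contained argument that never invokes the law of sines and isolates the obtuseness hypothesis as the single clean inequality $\sinh c_1\sinh c_2\ge\cosh c_1\cosh c_2\sinh^2h$, at the cost of a longer computation. Both are valid proofs of the proposition.
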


We will start with a lemma that is interesting in its own right.

\begin{lemma}\label{lemma:area}
In every triangle the following equation holds.
\[
\sinh c\sinh h=\sinh a\sinh b\sin\gamma
\]
\end{lemma}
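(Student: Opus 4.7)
The plan is to drop the altitude from the vertex $C$ opposite to side $c$ and reduce the claim to the hyperbolic right-triangle identity together with the hyperbolic law of sines.

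First I would set up coordinates by letting $F$ be the foot of the altitude of length $h$ from $C$ onto the line through $A$ and $B$. This creates a right triangle $AFC$ with hypotenuse $b$, one leg equal to $h$, and (hyperbolic-geometry) angle $\alpha$ at $A$. The hyperbolic right-triangle formula relating an opposite leg to the hypotenuse and the opposite angle then gives
\[
\sinh h = \sinh b \sin \alpha.
\]

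Next I would apply the hyperbolic law of sines in the original triangle, namely
\[
\frac{\sinh a}{\sin \alpha} = \frac{\sinh c}{\sin \gamma},
\]
to eliminate $\sin\alpha$ in favor of $\sinh a \sin\gamma / \sinh c$. Substituting into the previous display and clearing denominators yields $\sinh c \sinh h = \sinh a \sinh b \sin \gamma$, as required. (Alternatively, one could start from the right triangle $BFC$ and use $\sinh h = \sinh a \sin \beta$; either side of $c$ leads to the same conclusion by symmetry.)

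The only subtlety, and the step I would be most careful about, is the case in which $\alpha$ (or $\beta$) is obtuse, so that $F$ does not lie on the segment $AB$ but on its extension. In this situation the right triangle $AFC$ has angle $\pi - \alpha$ at $A$ rather than $\alpha$, but since $\sin(\pi - \alpha) = \sin \alpha$ the identity $\sinh h = \sinh b \sin \alpha$ still holds. Hence the derivation above is valid for every hyperbolic triangle, with no case distinction needed in the final formula.
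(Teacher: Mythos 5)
Your proof is correct and follows essentially the same route as the paper's: combine the right-triangle identity $\sinh h=\sinh b\sin\alpha$ (the paper uses the symmetric version $\sinh h=\sinh a\sin\beta$) with the hyperbolic law of sines and multiply. Your extra remark about the foot of the altitude falling outside the segment is a nice point of care that the paper leaves implicit.
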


Note that in Euclidean geometry the analogous theorem would be the statement
that $ch=ab\sin\gamma$, which is true by the fact that both sides of the
equation represent twice the area of the triangle. Interestingly, in hyperbolic
geometry, the sides of the corresponding equation do not represent
the area of the triangle.

\begin{proof}
By the law of sines,
\[
\frac{\sinh b}{\sin\beta}=\frac{\sinh c}{\sin\gamma},
\]
so
\[
\sinh c=\frac{\sinh b\sin\gamma}{\sin\beta}.
\]
By right triangle trigonometry, $\sinh h=\sinh a\sin\beta$. Multiplying these
equations, the result follows.
\end{proof}

\begin{proof}[Proof of Proposition~\ref{proposition:obtuse}]
Note that $a+b>c+h$ if and only if $\cosh(a+b)>\cosh(c+h)$. Using the addition
formula for $\cosh$, then the fact that $\cosh h\geq 1$ and $\cosh c\geq 0$,
and then the law of cosines, in this order, we get
\begin{multline*}
\cos(c+h)
=\cosh c\cosh h+\sinh c\sinh h
\geq\cosh c+\sinh c\sinh h\\
=\cosh a\cosh b-\sinh a\sinh b\cos\gamma+\sinh a\sinh b\sin\gamma\\
=\cosh a\cosh b+\sinh a\sinh b(\sin\gamma-\cos\gamma)
\end{multline*}
Notice that $\sin\gamma-\cos\gamma\geq 1$ if $\pi/2\leq\gamma\leq\pi$. So
\begin{multline*}
\cosh a\cosh b+\sinh a\sinh b(\sin\gamma-\cos\gamma)\\
\geq\cosh a\cosh b+\sinh a\sinh b
=\cosh(a+b).
\end{multline*}
\end{proof}

\section{Converting angles to lengths}

Since the angles of a hyperbolic triangle uniquely determine the triangle, it
is possible to rephrase the condition $a+b>c+h$ with $\alpha,\beta,\gamma$. In
what follows, our goal is find a function $f(\alpha,\beta,\gamma)$, as simple as
possible, such that $a+b>c+h$ if and only if $f(\alpha,\beta,\gamma)>0$. Following
Proposition~\ref{proposition:greatest} and
Proposition~\ref{proposition:obtuse}, in the rest of the section we will
assume that $\gamma<\pi/2$ is the greatest angle of the triangle.

The following lemma is implicit in \cite{Bak-Pow-07}. We
include the proof for completeness.

\begin{lemma}\label{lemma:BakerPowers}
A triangle satisfies the strong triangle inequality if and only if
\[
\frac{\cos\alpha\cos\beta+\cos\gamma}{\cos\gamma+1-\sin\gamma}-1<\cosh h.
\]
Furthermore, the formula holds with equality if and only if $a+b=c+h$.
\end{lemma}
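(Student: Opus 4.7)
The plan is to start from the observation that $a+b>c+h$ is equivalent to $\cosh(a+b)>\cosh(c+h)$ (since $\cosh$ is strictly increasing on $[0,\infty)$), and then transform the latter step-by-step until the sides $a$, $b$, $c$ have been replaced by the angles $\alpha,\beta,\gamma$ and the altitude $h$. The ingredients will be the hyperbolic addition formula, Lemma~\ref{lemma:area}, the ordinary law of cosines ($\cosh c = \cosh a\cosh b - \sinh a\sinh b\cos\gamma$), its angle dual ($\cos\gamma = -\cos\alpha\cos\beta + \sin\alpha\sin\beta\cosh c$), and the right-triangle identity for the altitude, $\sinh h = \sinh a\sin\beta = \sinh b\sin\alpha$.

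First I would expand $\cosh(a+b)$ and $\cosh(c+h)$ by the addition formula; substitute $\sinh c\sinh h = \sinh a\sinh b\sin\gamma$ from Lemma~\ref{lemma:area}; and replace $\cosh a\cosh b$ by $\cosh c + \sinh a\sinh b\cos\gamma$ via the law of cosines. After collecting terms, the inequality $a+b>c+h$ becomes
\[
\sinh a\,\sinh b\,(1+\cos\gamma-\sin\gamma) > \cosh c\,(\cosh h - 1).
\]

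The decisive move is to multiply the two altitude identities together to get $\sinh^2 h = \sinh a\,\sinh b\,\sin\alpha\,\sin\beta$, so that
\[
\sinh a\,\sinh b = \frac{(\cosh h-1)(\cosh h+1)}{\sin\alpha\,\sin\beta}.
\]
Substituting this and canceling the strictly positive factor $\cosh h-1$ (the altitude of a nondegenerate triangle is positive) reduces the inequality to
\[
(\cosh h+1)(1+\cos\gamma-\sin\gamma) > \sin\alpha\,\sin\beta\,\cosh c.
\]
The dual law of cosines rewrites the right-hand side as $\cos\alpha\cos\beta+\cos\gamma$, and dividing by $1+\cos\gamma-\sin\gamma$, which is strictly positive under the running assumption $\gamma<\pi/2$, yields the desired formula.

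Every step above is an equivalence, because each quantity we cancel or divide by is strictly positive in the setting $\gamma<\pi/2$, $h>0$; consequently the \emph{iff} statement and the equality clause both fall out at once. The main obstacle I anticipate is purely one of recognition: noticing that the factorization $\sinh^2 h=(\cosh h-1)(\cosh h+1)$ combined with the altitude identity is exactly what is needed to neutralize the awkward $\cosh h-1$ on the right and synthesize the $\cosh h+1$ demanded by the target formula. Once this cancellation is spotted, the remainder is a routine application of the two laws of cosines.
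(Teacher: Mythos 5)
Your proposal is correct and follows essentially the same route as the paper: expand $\cosh(a+b)-\cosh(c+h)$, apply the law of cosines to $\cosh c$, use Lemma~\ref{lemma:area}, convert $\sinh a\sinh b$ via $\sinh^2 h=\sinh a\sinh b\sin\alpha\sin\beta$, and finish with the dual law of cosines. The only cosmetic difference is that you cancel the factor $\cosh h-1$ directly where the paper factors out $\tfrac{\sinh a\sinh b}{1+\cosh h}$; the underlying identity $1-\cosh^2h=-\sin\alpha\sin\beta\,\sinh a\sinh b$ is the same in both.
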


\begin{proof}
Recall that $a+b>c+h$ if and only if $\cosh(a+b)-\cosh(c+h)>0$. Using the $\cosh$ addition formula and the law of
cosines on $\cosh c$, we have
\begin{multline*}
\cosh(a+b)-\cosh(c+h)\\
=\cosh a\cosh b+\sinh a\sinh b-\cosh c\cosh h-\sinh c\sinh h\\
=\cosh c+\sinh a\sinh b\cos\gamma+\sinh a\sinh b-\cosh c\cosh h-\sinh c\sinh
h\\
=\cosh c(1-\cosh h)+\sinh a\sinh b(\cos\gamma+1)-\sinh c\sinh h
\end{multline*}

By Lemma~\ref{lemma:area},
\begin{multline*}
\cosh c(1-\cosh h)+\sinh a\sinh b(\cos\gamma+1)-\sinh c\sinh h\\
=\cosh c(1-\cosh h)+\sinh a\sinh b(\cos\gamma+1)-\sinh a\sinh b\sin\gamma\\
=\frac{\sinh a\sinh b}{1+\cosh h}\left[\cosh c\frac{1-\cosh^2 h}{\sinh a\sinh
b}+(1+\cosh h)(\cos\gamma+1-\sin\gamma)\right]
\end{multline*}
By the fact that $\sinh h=\sinh b\sin\alpha=\sinh a\sin\beta$, we have
\[
\frac{1-\cosh^2 h}{\sinh a\sinh b}
=\frac{-\sinh^2 h}{\sinh a\sinh b}
=\frac{-\sinh b\sin\alpha\cdot\sinh a\sin\beta}{\sinh a\sinh b}
=-\sin\alpha\sin\beta,
\]
so, using the dual form of the law of cosines,
\begin{multline*}
\cos(a+b)-\cosh(c+h)\\
=\frac{\sinh a\sinh b}{1+\cosh h}\left[-\cosh c\sin\alpha\sin\beta+(1+\cosh
h)(\cos\gamma+1-\sin\gamma)\right]\\
=\frac{\sinh a\sinh b}{1+\cosh
h}\left[-(\cos\alpha\cos\beta+\cos\gamma)+(1+\cosh
h)(\cos\gamma+1-\sin\gamma)\right].
\end{multline*}

Since $\frac{\sinh a\sinh b}{1+\cosh h}>0$, we have the strong triangle
inequality holds, if and only if
\[
\cos\alpha\cos\beta+\cos\gamma<(1+\cosh
h)(\cos\gamma+1-\sin\gamma),
\]
and the result follows.

A minor variation of the proof shows the case of equality.
\end{proof}

\begin{lemma}\label{lemma:nonnegative}
For all triangles with $\gamma > \max\{\alpha, \beta\}$,
\[
\frac{\cos\alpha\cos\beta+\cos\gamma}{\cos\gamma+1-\sin\gamma}>1.
\]
\end{lemma}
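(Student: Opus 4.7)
The plan is to clear the denominator and reduce to a one-line Pythagorean argument. Under the section's standing assumption that $\gamma < \pi/2$, the denominator $\cos\gamma + 1 - \sin\gamma$ is strictly positive, since $\cos\gamma > 0$ and $\sin\gamma < 1$ force $\cos\gamma + 1 - \sin\gamma > 1 - \sin\gamma > 0$. Multiplying through, the claim becomes $\cos\alpha\cos\beta + \cos\gamma > \cos\gamma + 1 - \sin\gamma$, which after cancellation of $\cos\gamma$ is simply
\[
\cos\alpha\cos\beta + \sin\gamma > 1.
\]

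Next I would exploit the maximality hypothesis $\gamma > \max\{\alpha,\beta\}$ together with $\gamma < \pi/2$. Since cosine is strictly decreasing on $[0,\pi/2]$ and both $\alpha$ and $\beta$ lie in $(0,\gamma) \subset (0,\pi/2)$, we get $\cos\alpha > \cos\gamma > 0$ and $\cos\beta > \cos\gamma > 0$, hence $\cos\alpha\cos\beta > \cos^2\gamma$. Using $\cos^2\gamma = 1 - \sin^2\gamma$ then gives
\[
\cos\alpha\cos\beta + \sin\gamma > \cos^2\gamma + \sin\gamma = 1 + \sin\gamma(1 - \sin\gamma),
\]
and the right-hand side strictly exceeds $1$ because $0 < \sin\gamma < 1$.

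There is no serious obstacle in this argument: once the denominator is cleared, everything reduces to the elementary inequality $\cos^2\gamma + \sin\gamma > 1$ on $(0,\pi/2)$, boosted by the strict monotonicity of cosine. It is perhaps worth flagging that the hyperbolic triangle constraint $\alpha+\beta+\gamma<\pi$ plays no role in the proof; only the strict maximality of $\gamma$ and the bound $\gamma < \pi/2$ inherited from the section's standing assumption are actually used.
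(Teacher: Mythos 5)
Your proof is correct and takes essentially the same route as the paper's: both arguments reduce the claim to the Pythagorean-identity estimate that a squared cosine exceeds $1-\sin\gamma$, which ultimately rests on $\sin^2\gamma<\sin\gamma$ for $\gamma\in(0,\pi/2)$. The only cosmetic difference is that the paper first assumes WLOG $\alpha\le\beta$ and bounds $\cos\alpha\cos\beta\ge\cos^2\beta>1-\sin\gamma$, while you use the symmetric (and slightly cruder, but sufficient) bound $\cos\alpha\cos\beta>\cos^2\gamma$, avoiding the WLOG step.
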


\begin{proof}
Without loss of generality,
$0<\alpha\leq\beta<\gamma<\pi/2$. Then
\begin{gather*}
0>\sin\gamma(\sin\gamma-1)=\sin^2\gamma-\sin\gamma>\sin^2\beta-\sin\gamma\\
\cos^2\beta>\cos^2\beta+\sin^2\beta-\sin\gamma=1-\sin\gamma\\
\cos^2\beta+\cos\gamma>1-\sin\gamma+\cos\gamma,
\end{gather*}
so
\[
\frac{\cos^2\beta+\cos\gamma}{1-\sin\gamma+\cos\gamma}>1.
\]
Since $0<\cos\beta\leq\cos\alpha$, we have
\[
\frac{\cos^2\beta+\cos\gamma}{1-\sin\gamma+\cos\gamma}
\leq\frac{\cos\alpha\cos\beta+\cos\gamma}{1-\sin\gamma+\cos\gamma},
\]
and the results follows.
\end{proof}

By Lemma~\ref{lemma:BakerPowers} and Lemma~\ref{lemma:nonnegative}, we can
conclude that the strong triangle
inequality holds if and only if
\begin{equation}
\left(\frac{\cos\alpha\cos\beta+\cos\gamma}{\cos\gamma+1-\sin\gamma}-1\right)^2
<\cosh^2 h.\label{eq:1}
\end{equation}
Using the law of cosines,
\begin{multline}
\cosh^2 h
=\sinh^2 h+1
=\sin^2\beta\sinh^2 a+1
=\sin^2\beta(\cosh^2 a-1)+1\\
=\sin^2\beta\left(\frac{\cos\beta\cos\gamma+\cos\alpha}{\sin\beta\sin\gamma}\right)^2-\sin^2\beta+1\\
=\cos^2\beta+\left(\frac{\cos\beta\cos\gamma+\cos\alpha}{\sin\gamma}\right)^2.\label{eq:2}
\end{multline}
Equations (\ref{eq:1}) and (\ref{eq:2}) together imply the following statement.

\begin{lemma}\label{lemma:f}
The strong triangle inequality holds if and only if
\[
f(\alpha,\beta,\gamma)=
\cos^2\beta+\left(\frac{\cos\beta\cos\gamma+\cos\alpha}{\sin\gamma}\right)^2-
\left(\frac{\cos\alpha\cos\beta+\cos\gamma}{\cos\gamma+1-\sin\gamma}-1\right)^2
>0
\]
\end{lemma}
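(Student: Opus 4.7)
The plan is essentially an assembly of the two displayed equations (\ref{eq:1}) and (\ref{eq:2}) already derived immediately above the lemma, so I would structure the argument as three short steps and do little original work beyond bookkeeping.

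First I would invoke Lemma~\ref{lemma:BakerPowers}, which reduces the strong triangle inequality to
\[
\frac{\cos\alpha\cos\beta+\cos\gamma}{\cos\gamma+1-\sin\gamma}-1<\cosh h.
\]
Under the section's standing assumption that $\gamma<\pi/2$ is the unique greatest angle, Lemma~\ref{lemma:nonnegative} guarantees the left-hand side is positive, and $\cosh h>0$ always, so both sides are positive and squaring is an equivalence. This is exactly equation (\ref{eq:1}).

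Second, I would rewrite $\cosh^2 h$ purely in terms of $\alpha,\beta,\gamma$. Right-triangle trigonometry gives $\sinh h=\sinh a\sin\beta$, so $\cosh^2 h=\sin^2\beta(\cosh^2 a-1)+1$; then applying the dual law of cosines to express $\cosh a=(\cos\beta\cos\gamma+\cos\alpha)/(\sin\beta\sin\gamma)$ and using $1-\sin^2\beta=\cos^2\beta$ collapses this expression to equation (\ref{eq:2}).

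Third is the one-line combination: substituting (\ref{eq:2}) into (\ref{eq:1}) and moving the squared quantity to the other side produces exactly $f(\alpha,\beta,\gamma)>0$. I do not expect a real obstacle, because all the substantive work has already been done in Lemmas~\ref{lemma:BakerPowers} and~\ref{lemma:nonnegative} together with the explicit calculation of $\cosh^2 h$ via the law of cosines; this lemma merely repackages the resulting equivalence and names it $f$ for later use.
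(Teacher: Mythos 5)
Your proposal is correct and follows the paper's own route exactly: the paper likewise obtains inequality (\ref{eq:1}) by combining Lemma~\ref{lemma:BakerPowers} with the positivity from Lemma~\ref{lemma:nonnegative} to justify squaring, derives (\ref{eq:2}) for $\cosh^2 h$ via $\sinh h=\sinh a\sin\beta$ and the dual law of cosines, and then reads off the lemma as the combination of the two. No gaps.
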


Notes:
\begin{enumerate}
\item $f(\alpha,\beta,\gamma)=0$ if and only if $a+b=c+h$. The proof of this is
a minor variation of that of Lemma~\ref{lemma:f}.
\item $f(\alpha,\beta,\gamma)$ is symmetric in $\alpha$ and $\beta$. This is
obvious from the geometry, but it is also not hard to prove directly.
\item $f(\alpha,\beta,\gamma)$ is quadratic in $\cos\alpha$ and $\cos\beta$.
\item $f(\alpha,\beta,\gamma)$ is not monotone in either $a+b-c-h$ or
in $\cosh(a+b)-\cosh(c+h)$. Therefore it is not directly useful for studying the
difference of the two sides in the strong triangle inequality.
\end{enumerate}

Also note that it is fairly trivial to write down the condition $a+b>c+h$ with
an inequality involving only $\alpha$, $\beta$, and $\gamma$. Indeed, one can
just use the law of cosines to compute $a$, $b$, and $c$ from the angles, and
some right triangle trigonometry to compute $h$. But just doing this simple
approach will result in a formidable formula with inverse trigonometric
functions and square roots. Even if one uses the fact that the condition
is equivalent to $\cosh(a+b)>\cosh(c+h)$, the resulting naive formula is
hopelessly complicated, and certainly not trivial to solve for $\alpha$ and
$\beta$. Therefore, the importance and depth of Lemma~\ref{lemma:f} should not
be underestimated.

\section{Computing probabilities}

Motivated by the original goal of computing the probability that the strong
triangle inequality holds in hyperbolic geometry, we need to clarify first under
what model we compute this probability.

In hyperbolic geometry there exists a triangle for arbitrarily chosen angles,
provided that their sum is less than $\pi$. So it is natural to choose the
three angles independently uniformly at random in $(0,\pi)$, and then aim to
compute the probability that the strong triangle inequality holds, given that
the sum of the chosen angles is less than $\pi$.

Of course, the computation can be reduced to a computation of volumes.
Let
\[
F=\{(\alpha,\beta,\gamma)\in(0,\pi)^3:\alpha+\beta+\gamma<\pi\text{, }\max\{\alpha,\beta\}<\gamma<\pi/2\},
\]
and let
\[
S=\{(\alpha,\beta,\gamma)\in F:f(\alpha,\beta,\gamma)>0\}.
\]
Since $f$ is continuous when $0<\gamma<\pi/2$, and $S$ is the level set of $f$
(within $F$), $S$ is measurable, so its volume is well-defined. The desired
probability is then
\[
\frac{\vol(S)}{\pi^3/6},
\]
where the denominator is the volume of the tetrahedron for which
$\alpha+\beta+\gamma<\pi$.

So it remains to compute the volume of $S$. Fix $0<\gamma<\pi/2$, and let
\begin{align*}
P_\gamma&=\{(\alpha,\beta):(\alpha,\beta,\gamma)\in F\text{ and
}f(\alpha,\beta,\gamma)>0\}\\
N_\gamma&=\{(\alpha,\beta):(\alpha,\beta,\gamma)\in F\text{ and
}f(\alpha,\beta,\gamma)<0\}\\
Z_\gamma&=\{(\alpha,\beta):(\alpha,\beta,\gamma)\in F\text{ and
}f(\alpha,\beta,\gamma)=0\}.
\end{align*}
(See Figures~\ref{fig:1-2} and \ref{fig:1-3} for illustration for $\gamma=1.2$ and $\gamma=1.3$ respectively.)
It is clear that
\[
\vol(S)=\int_0^{\pi/2}\mu(P_\gamma)\,d\gamma,
\]
where $\mu$ is the 2-dimensional Lebesgue measure.

It is not hard to see why it will be useful for us to solve the equation
$f(\alpha,\beta,\gamma)=0$: it will provide a description of the set
$Z_\gamma$, which will help us analyze the sets $P_\gamma$, and $N_\gamma$.
This is easy, because $f$ is quadratic in $\cos\beta$. The following extremely
useful lemma shows that at most one of the quadratic solutions will lie in $F$.

\begin{lemma}\label{lemma:whichroot}
Let $(\alpha,\beta,\gamma)\in F$
such that $f(\alpha,\beta,\gamma)=0$. Let
\begin{align*}
a&=\csc^2\gamma-\left(\frac{\cos\alpha}{\cos\gamma+1-\sin\gamma}\right)^2\\
b&=\frac{\cos\alpha(\cos\gamma+1)}{\sin^2\gamma}\\
c&=\left(\frac{\cos\alpha}{\sin\gamma}\right)^2-\left(\frac{1-\sin\gamma}{\cos\gamma+1-\sin\gamma}\right)^2.
\end{align*}
Then
\[
\cos\beta=\frac{-b-\sqrt{b^2-4ac}}{2a}.
\]
\end{lemma}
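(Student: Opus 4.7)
The plan has three parts: confirm the quadratic form, identify which of its two roots equals $\cos\beta$, and verify the sign of $a$ that makes the ``$-$'' choice the correct one.

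First, I will expand $f$ from Lemma~\ref{lemma:f} as a polynomial in $\cos\beta$, using the identities $\sin^2\gamma+\cos^2\gamma=1$ and $D^2=2(1+\cos\gamma)(1-\sin\gamma)$ for $D=\cos\gamma+1-\sin\gamma$. A direct calculation confirms that the coefficients of $\cos^2\beta$, $\cos\beta$, and the constant term are precisely the $a$, $b$, $c$ of the statement. So $f=0$ is the quadratic $a(\cos\beta)^2+b\cos\beta+c=0$ with roots $\frac{-b\pm\sqrt{b^2-4ac}}{2a}$.

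Next, I will locate $\cos\beta$ among the two roots. The equation $f=0$ was obtained by squaring the identity $\cosh h=\frac{\cos\alpha\cos\beta+\cos\gamma}{D}-1$ from Lemma~\ref{lemma:BakerPowers}. The second root $v'$ therefore corresponds to the opposite sign when undoing the square, giving $\cos\alpha\,v'\le 1-\sin\gamma$. Lemma~\ref{lemma:nonnegative} provides $\cos\alpha\cos\beta>1-\sin\gamma$ for $(\alpha,\beta,\gamma)\in F$, so since $\cos\alpha>0$ we obtain $\cos\beta>v'$: the valid value is the larger of the two roots.

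Finally, I must show $a<0$, because only then does $\frac{-b-\sqrt{b^2-4ac}}{2a}$ return the larger root. Unpacking the definition and applying half-angle identities, $a<0$ is equivalent to $(1+\cos\alpha)\tan(\gamma/2)>1$. From $\cosh h>1$ (the triangle is non-degenerate) and the Lemma~\ref{lemma:BakerPowers} identity we obtain $\cos\alpha\cos\beta>\cos\gamma+2(1-\sin\gamma)$; combining with $\cos\beta<1$ yields $\cos\alpha>\cos\gamma+2(1-\sin\gamma)$, hence $1+\cos\alpha>3+\cos\gamma-2\sin\gamma$, and it suffices to show $(3+\cos\gamma-2\sin\gamma)\tan(\gamma/2)\ge 1$. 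Setting $x=\sin(\gamma/2)$ and $y=\cos(\gamma/2)$ and using $x^2+y^2=1$, this reduces to $(2x-y)(1-2xy)\ge 0$, i.e., $(2\sin(\gamma/2)-\cos(\gamma/2))(1-\sin\gamma)\ge 0$. The factor $1-\sin\gamma$ is positive because $\gamma<\pi/2$, while $2\sin(\gamma/2)-\cos(\gamma/2)>0$ is the half-angle form of $2\sin\gamma-\cos\gamma>1$, which follows from $\cos\alpha\cos\beta<1$ combined with the inequality $\cos\alpha\cos\beta>\cos\gamma+2(1-\sin\gamma)$ already derived. The main obstacle is this last step: the sign of $a$ is not transparent from the statement, and its proof requires extracting two consequences from $\cosh h>1$ and recognizing the clean factorization of the reduced inequality.
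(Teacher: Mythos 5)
Your step 1 (the quadratic form) and step 3 (the proof that $a<0$) both check out: I verified that the coefficients of $\cos^2\beta$, $\cos\beta$, and the constant term are indeed $a$, $b$, $c$, and that the chain $(1+\cos\alpha)\tan(\gamma/2)>1 \Leftarrow (3+\cos\gamma-2\sin\gamma)\tan(\gamma/2)\ge 1 \Leftrightarrow (2\sin(\gamma/2)-\cos(\gamma/2))(1-\sin\gamma)\ge 0$ is sound, as are the two consequences of $\cosh h>1$ feeding it. The gap is in step 2. Write $D=\cos\gamma+1-\sin\gamma$, $g(t)=\frac{t\cos\alpha+\cos\gamma}{D}-1$ and $L(t)=t^2+\bigl(\frac{t\cos\gamma+\cos\alpha}{\sin\gamma}\bigr)^2$, so that the quadratic is $L(t)-g(t)^2=0$ in $t=\cos\beta$. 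You claim the rejected root $v'$ ``corresponds to the opposite sign when undoing the square,'' i.e.\ $\sqrt{L(v')}=-g(v')$, whence $g(v')\le 0$ and $\cos\alpha\,v'\le 1-\sin\gamma$. This does not follow from the mere fact that the equation arose by squaring: $\sqrt{L(t)}=g(t)$ is a nonlinear equation in $t$ (its left-hand side is the norm of an affine map of $t$, hence convex but not affine), so nothing a priori prevents both roots of the quadratic from lying on the branch $\sqrt{L}=g$ with $g>0$ at each, the branch $\sqrt{L}=-g$ contributing neither. The assertion $g(v')\le 0$ is true, but it is essentially the content of the lemma and needs an argument. Two ways to supply one: (i) the paper's route --- show $b>0$ and $c>0$, deduce that $(-b+\sqrt{b^2-4ac})/(2a)<0$ in every sign case of $a$, and conclude that this root cannot equal $\cos\beta>0$ (this makes your steps 2 and 3 unnecessary); or (ii) within your framework, use $a<0$ to see that $\sqrt{L(t)}-g(t)\to-\infty$ as $t\to+\infty$, so this convex function is non-increasing and vanishes at most once, forcing the second root of the quadratic onto the branch $\sqrt{L}=-g$. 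Note that fix (ii) relies on $a<0$, which you establish only in step 3, so your steps would have to be reordered.

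In short, your route --- identify $\cos\beta$ as the larger root via Lemma~\ref{lemma:nonnegative}, then show $a<0$ so that the ``$-$'' formula returns the larger root --- is genuinely different from the paper's, which never determines the sign of $a$ and instead shows the other root is always negative while $\cos\beta>0$ on $F$. Your sign computation for $a$ is correct and has independent interest, but as written the root-identification step is asserted rather than proved, and that is precisely the step the lemma exists to settle.
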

\begin{proof}
By tedious, but simple algebra one can see that $f(\alpha,\beta,\gamma)=0$ if and only if
$a\cos^2\beta+b\cos\beta+c=0$. To see the result, we will show that if
$(\alpha,\beta,\gamma)\in F$, then $(-b+\sqrt{b^2-4ac})/(2a)<0$. We will
proceed by showing that for $(\alpha,\beta,\gamma)\in F$, we have $b>0$, and
$c>0$. The former is trivial. For the latter, here follows the sequence of
implied inequalities.
\begin{gather*}
1+\cos\gamma>\sin\gamma(1+\cos\gamma)=\sin\gamma+\sin\gamma\cos\gamma\\
\cos^2\gamma+\sin^2\gamma+\cos\gamma>\sin\gamma+\sin\gamma\cos\gamma\\
\cos\gamma(\cos\gamma+1-\sin\gamma)>\sin\gamma(1-\sin\gamma)\\
\frac{\cos\alpha}{\sin\gamma}\geq\frac{\cos\gamma}{\sin\gamma}>\frac{1-\sin\gamma}{\cos\gamma+1-\sin\gamma}
\end{gather*}
Squaring both sides will give $c>0$.

We have shown that $b,c>0$. If $a=0$, then $\cos\beta=-c/b<0$, and that is
inadmissible. If $a>0$, then $b^2-4ac<b^2$, so $(-b+\sqrt{b^2-4ac})/(2a)<0$, and
similarly, if $a<0$, then $b^2-4ac>b^2$, so $(-b+\sqrt{b^2-4ac})/(2a)<0$ again.
\end{proof}

Recall that a set $R\subseteq\mathbb{R}\times\mathbb{R}$ is called a
\emph{function}, if for all $x\in\mathbb{R}$ there is at most one
$y\in\mathbb{R}$ such that $(x,y)\in R$. Also $R^{-1}=\{(y,x):(x,y)\in R\}$.
$R$ is \emph{symmetric}, if $R=R^{-1}$. The domain of a function $R$ is the set
$\{x:\exists y, (x,y)\in R\}$.

So far we have learned the following about $Z_\gamma$.
\begin{itemize}
\item $Z_\gamma$ is a function (by Lemma~\ref{lemma:whichroot}).
\item $Z_\gamma$ is symmetric.
\item $Z_\gamma$ is injective (that is $Z_\gamma^{-1}$ is function).
\item $\mu(Z_\gamma)=0$.
\end{itemize}
The last fact follows, because $Z_\gamma$ is closed, and hence, it is
measurable.

Therefore the computation may be reduced to that of $\mu(N_\gamma)$,
which will turn out to be more convenient.

Let $\gamma$ be fixed, and let $0<\alpha<\gamma$. We will say that $\alpha$ is
\emph{all-positive}, if for all $\beta$ we have $f(\alpha,\beta,\gamma)\geq 0$.
Similarly, $\alpha$ is \emph{all-negative}, if for all $\beta$,
$f(\alpha,\beta,\gamma)\leq 0$. If there is a $\beta'$ such that
$f(\alpha,\beta',\gamma)=0$, then there are two possibilities: if for all
$\beta<\beta'$, we have $f(\alpha,\beta,\gamma)<0$, and for all $\beta>\beta'$,
we have $f(\alpha,\beta,\gamma)>0$, then we will say $\alpha$ is
\emph{negative-positive}. If it's the other way around, we will say $\alpha$ is
\emph{positive-negative}.

We will use the function notation $z(\alpha)=\beta$, when
$(\alpha,\beta)\in Z_\gamma$; $z(\alpha)$ is undefined if
$\alpha$ is not in the domain of $Z_\gamma$. When we want to emphasize the
dependence on $\gamma$, we may write $z_\gamma(\alpha)$ for $z(\alpha)$.

\begin{lemma}\label{lemma:formula}
If $z_\gamma$ is defined at $\alpha$, then
\[
z_\gamma(\alpha)=\cos^{-1}\left(\frac{-b-\sqrt{b^2-4ac}}{2a}\right),
\]
where $a,b,c$ are as in Lemma~\ref{lemma:whichroot}.
\end{lemma}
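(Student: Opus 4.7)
The plan is to view this lemma as essentially a restatement of Lemma~\ref{lemma:whichroot}: once we know $\cos\beta$ uniquely, we just need to invert $\cos$ on the correct branch. Concretely, if $z_\gamma$ is defined at $\alpha$, then by the definition of $z_\gamma$ there exists a (unique) $\beta$ with $(\alpha,\beta,\gamma)\in F$ and $f(\alpha,\beta,\gamma)=0$. Lemma~\ref{lemma:whichroot} immediately identifies $\cos\beta$ with the expression $(-b-\sqrt{b^2-4ac})/(2a)$, so the only remaining step is to argue that applying $\cos^{-1}$ to both sides returns $\beta$ itself rather than some other preimage.

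For this we use the constraints built into $F$. Since $\gamma$ is the unique greatest angle of the triangle and $\gamma<\pi/2$, we have $0<\beta<\gamma<\pi/2$. The principal branch of $\cos^{-1}\colon[-1,1]\to[0,\pi]$ is inverse to $\cos$ on all of $(0,\pi)$, hence in particular on $(0,\pi/2)$, so $\cos^{-1}(\cos\beta)=\beta$ and the formula follows. There is really no obstacle here beyond bookkeeping; the letters $a,b,c$ in Lemma~\ref{lemma:whichroot} are of course the quadratic coefficients, not the side lengths of the triangle, but this was already fixed in the statement of that lemma and no ambiguity arises.
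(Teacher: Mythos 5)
Your proposal is correct and matches the paper's argument, which simply declares the lemma a direct consequence of Lemma~\ref{lemma:whichroot}; you merely make explicit the (easy) point that the principal branch of $\cos^{-1}$ recovers $\beta$ since $0<\beta<\pi$.
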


\begin{proof}
This is direct consequence of Lemma~\ref{lemma:whichroot}.
\end{proof}

Our next goal is to extend the set $F$ as follows:
\[ \overline{F} =\{(\alpha,\beta,\gamma)\in(0,\pi)^3:\alpha+\beta+\gamma\leq\pi\text{, }\max\{\alpha,\beta\} \leq\gamma<\pi/2\}.
\]
So we are extending $F$ by considering cases where $\alpha + \beta + \gamma = \pi$ and where $\max\{\alpha, \beta\} = \gamma$.
For fixed $\gamma$ such that $0 < \gamma < \pi/2$, the collection $\{P_\gamma,N_\gamma,Z_\gamma\}$ is extended by letting
\begin{align*}
\overline{P}_\gamma&= P_\gamma\ \cup\ \{(\alpha,\beta):(\alpha,\beta,\gamma)\in \overline{F}\setminus F\text{ and } a + b > c + h\}\\
\overline{N}_\gamma&= N_\gamma\ \cup\ \{(\alpha,\beta):(\alpha,\beta,\gamma)\in \overline{F}\setminus F\text{ and } a + b < c + h\}\\
\overline{Z}_\gamma&= Z_\gamma\ \cup\ \{(\alpha,\beta):(\alpha,\beta,\gamma)\in \overline{F}\setminus F\text{ and } a + b = c + h\}.
\end{align*}
We will show that with this extension, sequences of points
entirely outside of $P_\gamma$ can not converge to a point in $\overline{P}_\gamma$, and
similarly for $N_\gamma$. But first, we need a lemma that, in
a way, formalizes the well-known intuition that infinitesimally small
hyperbolic triangles are becoming arbitrarily similar to Euclidean triangles.

\begin{lemma}\label{lemma:infinitesimal}
Let $\{(\alpha_i,\beta_i,\gamma_i)\}_{i=1}^\infty$ be a sequence in
$\mathbb{R}^3$, such that $(\alpha_i,\beta_i,\gamma_i)\to(\alpha,\beta,\gamma)$
with $\alpha_i+\beta_i+\gamma_i<\pi$ for all $i$, and $\alpha+\beta+\gamma=\pi$. Let $a_i,b_i,c_i$ be the sides of the hyperbolic
triangle determined by $\alpha_i,\beta_i,\gamma_i$, and let $h_i$ be the height
corresponding to $c_i$. Furthermore, consider the class of similar Euclidean
triangles with angles $\alpha,\beta,\gamma$, and let $a,b,c$ be the sides of an
element of this class, and let $h$ be the height corresponding to $c$. Then
\[
\lim_{i\to\infty}\frac{a_i+b_i-c_i}{h_i}=\frac{a+b-c}{h}.
\]
\end{lemma}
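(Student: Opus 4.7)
The plan is to exploit the fact that as $\alpha_i+\beta_i+\gamma_i\to\pi$, the angular defect $\epsilon_i:=\pi-(\alpha_i+\beta_i+\gamma_i)$ tends to $0$, which forces the hyperbolic triangles to shrink to a point while their shape becomes Euclidean. I will make this quantitative by showing that each of $a_i,b_i,c_i,h_i$ is asymptotic to a common factor $\sqrt{\epsilon_i}$ times an explicit length; the limit of the ratio then follows immediately by division.

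First I would apply the dual hyperbolic law of cosines
\[
\cosh a_i=\frac{\cos\alpha_i+\cos\beta_i\cos\gamma_i}{\sin\beta_i\sin\gamma_i}
\]
together with the identity $\cos\alpha_i+\cos(\beta_i+\gamma_i)=2\sin(\epsilon_i/2)\sin(\alpha_i+\epsilon_i/2)$, obtained by sum-to-product after substituting $\beta_i+\gamma_i=\pi-\epsilon_i-\alpha_i$. This rewrites as
\[
\cosh a_i-1=\frac{2\sin(\epsilon_i/2)\sin(\alpha_i+\epsilon_i/2)}{\sin\beta_i\sin\gamma_i}\longrightarrow 0,
\]
so $a_i\to 0$. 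Invoking $\cosh x-1=x^2/2+O(x^4)$ gives the sharper estimate $a_i^2\sim 2\epsilon_i\sin\alpha/(\sin\beta\sin\gamma)$, and the analogous formulas hold for $b_i$ and $c_i$. Setting $L_i:=\sqrt{2\epsilon_i/(\sin\alpha\sin\beta\sin\gamma)}$ one obtains
\[
\frac{a_i}{L_i}\to\sin\alpha,\qquad \frac{b_i}{L_i}\to\sin\beta,\qquad \frac{c_i}{L_i}\to\sin\gamma.
\]

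For the altitude, Lemma~\ref{lemma:area} gives $\sinh c_i\sinh h_i=\sinh a_i\sinh b_i\sin\gamma_i$; since all sides and hence $h_i$ tend to $0$ and $\sinh x=x+O(x^3)$, this yields $h_i\sim a_ib_i\sin\gamma/c_i$, so $h_i/L_i\to\sin\alpha\sin\beta$. For the Euclidean comparison triangle, the law of sines gives $a=k\sin\alpha$, $b=k\sin\beta$, $c=k\sin\gamma$ for some $k>0$, and the area identity $ch=ab\sin\gamma$ gives $h=k\sin\alpha\sin\beta$. Dividing the asymptotics of the numerator and denominator,
\[
\frac{a_i+b_i-c_i}{h_i}=\frac{(a_i+b_i-c_i)/L_i}{h_i/L_i}\longrightarrow\frac{\sin\alpha+\sin\beta-\sin\gamma}{\sin\alpha\sin\beta}=\frac{a+b-c}{h}.
\]

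The main obstacle is keeping the asymptotic bookkeeping rigorous: one must first establish $a_i,b_i,c_i\to 0$ \emph{before} invoking the Taylor expansion of $\cosh$, and then the $o(1)$ error terms have to be controlled uniformly when both numerator and denominator vanish at the same order $\sqrt{\epsilon_i}$. This is routine provided the limit angles are all positive so $\sin\alpha,\sin\beta,\sin\gamma$ are bounded below, which is the implicit non-degeneracy assumption on the limit triangle.
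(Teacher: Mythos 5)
Your proof is correct, but it takes a genuinely different route from the paper's. The paper avoids all quantitative asymptotics: it applies the hyperbolic law of sines to get $\sinh a_i/\sinh b_i=\sin\alpha_i/\sin\beta_i\to\sin\alpha/\sin\beta=a/b$, uses $a_i,b_i\to 0$ to replace $\sinh a_i/\sinh b_i$ by $a_i/b_i$ in the limit, and then runs the same ratio argument on the right triangles cut off by the altitude to get $a_i/h_i\to a/h$, $b_i/h_i\to b/h$, and $c_i/h_i=(c_i/b_i)(b_i/h_i)\to c/h$; the claimed limit is then a sum of these ratios. You instead extract the precise order of each length from the angular defect: via the dual law of cosines and the sum-to-product identity you get $\cosh a_i-1=2\sin(\epsilon_i/2)\sin(\alpha_i+\epsilon_i/2)/(\sin\beta_i\sin\gamma_i)$, hence $a_i/L_i\to\sin\alpha$ with $L_i=\sqrt{2\epsilon_i/(\sin\alpha\sin\beta\sin\gamma)}$, and you handle $h_i$ through Lemma~\ref{lemma:area}. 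Your computation checks out (the identity, the asymptotics, and the Euclidean normalization $a=k\sin\alpha$, $h=k\sin\alpha\sin\beta$ are all right, and the denominator limit $\sin\alpha\sin\beta>0$ makes the final division legitimate since the limiting angles are positive). What your approach buys is an explicit rate --- each length is $\Theta(\sqrt{\epsilon_i})$ --- and, notably, an actual proof that the sides tend to $0$, which the paper's argument uses but leaves to the ``well-known intuition'' cited before the lemma. What the paper's approach buys is brevity and freedom from Taylor-expansion bookkeeping: since only ratios of lengths appear in the conclusion, no normalizing scale $L_i$ is needed. One small presentational point: your final paragraph flags the uniform control of the $o(1)$ errors as ``the main obstacle,'' but since everything here is a limit along a single sequence with all of $\sin\alpha,\sin\beta,\sin\gamma$ bounded away from $0$, this is routine, as you say; it would be worth stating explicitly that $\alpha,\beta,\gamma\in(0,\pi)$ is guaranteed in the lemma's hypotheses (the limit point lies in $(0,\pi)^3$ wherever the lemma is invoked), so the non-degeneracy you rely on is not an extra assumption.
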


\begin{proof}
First we will prove that $a_i/b_i\to a/b$. By the law of sines for both the hyperbolic and Euclidean planes,
\[
\frac{\sinh a_i}{\sinh
b_i}=\frac{\sin\alpha_i}{\sin\beta_i}\to\frac{\sin\alpha}{\sin\beta}=\frac ab.
\]
Since $a_i,b_i\to 0$, $\lim(\sinh a_i/\sinh b_i)=\lim(a_i/b_i)$, and the claim
follows.

Note that applying this to various triangles formed by the height and the
sides, this also implies $a_i/h_i\to a/h$, and $b_i/h_i\to b/h$. To see that
$c_i/h_i\to c/h$, just observe that $c_i/h_i=(c_i/b_i)(b_i/h_i)$.
\end{proof}

\begin{corollary}\label{corollary:nojump}
Let $\gamma\in(0,\pi/2)$.
Let $\{(\alpha_i,\beta_i)\}_{i=1}^\infty$ be a sequence in $\mathbb{R}^2$ such
that $(\alpha_i,\beta_i)\to (\alpha,\beta)$ with $(\alpha_i,\beta_i,\gamma)\in
F$ and $(\alpha,\beta,\gamma)\in \overline{F}$. Then $(\alpha_i,\beta_i)\not\in P_\gamma$ implies
$(\alpha,\beta)\not\in \overline{P}_\gamma$, and $(\alpha_i,\beta_i)\not\in N_\gamma$
implies $(\alpha,\beta)\not\in \overline{N}_\gamma$.
\end{corollary}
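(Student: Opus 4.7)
The plan is to split the analysis into two regimes depending on whether the limit point satisfies $\alpha+\beta+\gamma<\pi$ or $\alpha+\beta+\gamma=\pi$. Recall that $\overline{F}\setminus F$ consists of points where either $\alpha+\beta+\gamma=\pi$, or $\max\{\alpha,\beta\}=\gamma$, or both. In the first regime the sides of the hyperbolic triangle depend continuously on the angles and the argument is essentially a continuity statement, while in the second regime the hyperbolic triangle degenerates and one must normalize and invoke Lemma~\ref{lemma:infinitesimal}. Throughout I will focus on the $\overline{P}_\gamma$ statement, since the $\overline{N}_\gamma$ version is symmetric with reversed inequalities.

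First I would handle the case $\alpha+\beta+\gamma<\pi$, which covers both $(\alpha,\beta,\gamma)\in F$ and the boundary piece $\max\{\alpha,\beta\}=\gamma$ of $\overline{F}\setminus F$. Since $\gamma\in(0,\pi/2)$ is fixed and the angle sum is strictly less than $\pi$ in a full neighborhood of the limit point, the dual law of cosines expresses $\cosh a_i$ and $\cosh b_i$ as continuous functions of $(\alpha_i,\beta_i)$, and then $\cosh c_i$ and $\sinh h_i=\sinh a_i\sin\beta_i$ are continuous as well. The hypothesis $(\alpha_i,\beta_i)\notin P_\gamma$ gives $a_i+b_i-c_i-h_i\leq 0$ for every $i$, so by continuity $a+b-c-h\leq 0$ at the limit. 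According to the definition of $\overline{P}_\gamma$ this places $(\alpha,\beta)$ outside $\overline{P}_\gamma$: either $(\alpha,\beta,\gamma)\in F$ and $(\alpha,\beta)\in N_\gamma\cup Z_\gamma$, or $(\alpha,\beta,\gamma)\in\overline{F}\setminus F$ with $a+b\le c+h$ directly disqualifying the point.

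The case $\alpha+\beta+\gamma=\pi$ is where the genuine work lies, because the hyperbolic triangles are collapsing to a point and the sides $a_i,b_i,c_i,h_i$ all tend to $0$, so naive continuity is useless. Here I would normalize by dividing through by $h_i>0$: from $a_i+b_i\leq c_i+h_i$ one gets $(a_i+b_i-c_i)/h_i\leq 1$. Lemma~\ref{lemma:infinitesimal} then transports this to the limiting Euclidean triangle with angles $\alpha,\beta,\gamma$, yielding $(a+b-c)/h\leq 1$, i.e., $a+b\leq c+h$ in the Euclidean sense. This is precisely the condition that excludes $(\alpha,\beta)$ from $\overline{P}_\gamma$ along this portion of the boundary.

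The main subtlety, and the step requiring the most care, is the degenerate regime $\alpha+\beta+\gamma=\pi$: one must confirm that the definition of $\overline{P}_\gamma$ on this boundary piece is compatible with the Euclidean interpretation furnished by Lemma~\ref{lemma:infinitesimal} (the inequality $a+b>c+h$ is scale-invariant on the Euclidean side, so this is well-defined). Everything else reduces to continuity of the hyperbolic side-length functions away from the collapse locus, and the $N$-case follows by swapping $\leq$ for $\geq$ throughout.
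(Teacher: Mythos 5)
Your proof is correct and follows essentially the same route as the paper: handle the non-degenerate limits by continuity of the (hyperbolic) data and the degenerate limit $\alpha+\beta+\gamma=\pi$ by normalizing by $h_i$ and invoking Lemma~\ref{lemma:infinitesimal}. The only cosmetic difference is that you merge the interior case and the isosceles boundary case into a single continuity argument, whereas the paper treats continuity of $f$ on $F$ and continuity of the distances on the $\max\{\alpha,\beta\}=\gamma$ boundary as separate subcases.
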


\begin{proof}
If $(\alpha,\beta,\gamma)\in F$, then this is a direct consequence of the
continuity of $f$. If $(\alpha,\beta,\gamma)$ belongs to $\overline{F} \setminus F$, then $\alpha=\gamma$ or $\beta=\gamma$ or $\alpha+\beta+\gamma=\pi$. In the first two cases, all distances in the
triangles determined by the angles $\alpha_i,\beta_i,\gamma_i$ converge to the
corresponding distances in the limiting isosceles hyperbolic triangle
determined by the angles $\alpha,\beta,\gamma$. Finally, if
$\alpha+\beta+\gamma=\pi$, then this is a consequence of
Lemma~\ref{lemma:infinitesimal} with the observation that the strong triangle
inequality holds if and only if $\frac{a+b-c}{h}>1$.
\end{proof}

Recall the notations $\Gamma$ for the Baker--Powers constant, and $B$ for the
Bailey--Bannister constant. We will use the following lemma which was proven by Baker and Powers \cite{Bak-Pow-07}.
\begin{lemma}\label{lemma:BakerPowers-best}
If $\gamma>\Gamma$, then there exists $\alpha'>0$
such that for all $0<\alpha<\alpha'$, the strong triangle inequality fails for
the triangle with angles $\alpha$, $\alpha$, and $\gamma$.
\end{lemma}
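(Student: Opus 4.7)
The plan is to reduce the claim to a continuity argument for the function $f$ of Lemma~\ref{lemma:f} at the boundary point $(0, 0, \gamma)$. The case $\gamma \geq \pi/2$ is trivial: for any $\alpha$ with $2\alpha + \gamma < \pi$ the isosceles triangle with angles $\alpha, \alpha, \gamma$ exists, and Proposition~\ref{proposition:obtuse} immediately gives that the strong triangle inequality fails, so any admissible $\alpha'$ works. I therefore focus on the range $\Gamma < \gamma < \pi/2$.

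For that range, the key point is that although $(0, 0, \gamma)$ lies outside $F$, the rational-trigonometric formula defining $f$ is continuous there, since $\sin\gamma > 0$ and $\cos\gamma + 1 - \sin\gamma = 2\cos(\gamma/2)(\cos(\gamma/2) - \sin(\gamma/2)) > 0$ throughout $(0, \pi/2)$. First I would compute $f(0, 0, \gamma)$ by substituting $\cos\alpha = \cos\beta = 1$ in Lemma~\ref{lemma:f} and simplifying with the standard half-angle identities. The outcome is a ratio with a manifestly positive denominator, whose sign is controlled by a single half-angle expression. Next I would identify that expression, via a short half-angle rearrangement, as a positive multiple of the Baker--Powers function $-1 - \cos\gamma + \sin\gamma + \sin(\gamma/2)\sin\gamma$---both reduce to the cubic $s^3 + s^2 + s - 1 = 0$ in $s = \sin(\gamma/2)$---so it has the same unique root $\gamma = \Gamma$ in $(0, \pi/2)$. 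Evaluating at one sample value of $\gamma$ then pins down the sign: $f(0, 0, \gamma) < 0$ exactly when $\gamma > \Gamma$.

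Having $f(0, 0, \gamma) < 0$, continuity of $f$ in a neighborhood of that point yields an $\alpha' > 0$ such that $f(\alpha, \alpha, \gamma) < 0$ for every $\alpha \in (0, \alpha')$; shrinking $\alpha'$ if necessary so that $2\alpha + \gamma < \pi$ and $\alpha < \gamma$, Lemma~\ref{lemma:f} concludes that the strong triangle inequality fails on this range. I expect the only real obstacle to be the algebraic identification of $f(0, 0, \gamma)$ as a positive multiple of the Baker--Powers function; once that is in hand, the rest is a soft continuity argument requiring no further geometric input.
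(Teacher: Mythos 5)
Your proposal is correct, but it is worth noting that the paper does not prove this lemma at all: it is stated as a result imported from Baker and Powers \cite{Bak-Pow-07}, so you have supplied a self-contained argument where the paper defers to a citation. Your computation checks out. Setting $\cos\alpha=\cos\beta=1$ in Lemma~\ref{lemma:f} gives
\[
f(0,0,\gamma)=1+\left(\frac{1+\cos\gamma}{\sin\gamma}\right)^2-\left(\frac{\sin\gamma}{\cos\gamma+1-\sin\gamma}\right)^2=\csc^2\frac{\gamma}{2}-\left(\frac{\sin\gamma}{\cos\gamma+1-\sin\gamma}\right)^2,
\]
and since both $\sin(\gamma/2)$ and $\cos\gamma+1-\sin\gamma=2\cos\frac{\gamma}{2}\left(\cos\frac{\gamma}{2}-\sin\frac{\gamma}{2}\right)$ are positive on $(0,\pi/2)$, this is negative precisely when $\sin\frac{\gamma}{2}\sin\gamma>\cos\gamma+1-\sin\gamma$, i.e.\ precisely when the Baker--Powers function $-1-\cos\gamma+\sin\gamma+\sin\frac{\gamma}{2}\sin\gamma$ is positive --- not merely a positive multiple of it, but that function on the nose. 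Its unique root in $[0,\pi/2]$ is $\Gamma$ by definition, and it is negative near $0$ and positive at $\pi/2$, so $f(0,0,\gamma)<0$ exactly for $\gamma\in(\Gamma,\pi/2)$; continuity of the defining formula for $f$ at $(0,0,\gamma)$ (where both denominators are bounded away from zero) then produces the required $\alpha'$, and the $\gamma\geq\pi/2$ case is indeed dispatched by Proposition~\ref{proposition:obtuse}. Two small points of care: you need $\alpha'<\gamma$ so that Lemma~\ref{lemma:f} (which was derived under the standing assumption that $\gamma<\pi/2$ is the strict maximum angle) actually applies to the triangles $(\alpha,\alpha,\gamma)$, which you do note; and your argument actually yields the slightly stronger conclusion $a+b<c+h$, i.e.\ strict failure, which is harmless. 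What your route buys over the citation is an explanation of \emph{why} $\Gamma$ is the threshold: the Baker--Powers function is exactly the sign of $f$ at the degenerate isosceles corner.
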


\begin{lemma}\label{lemma:underB}
Let $\gamma\in(\Gamma,B)$. Then the set of values of $\alpha$ for which
$\alpha$ is negative-positive is an open interval
$(0,i_\gamma)$, and $z$ is defined, continuous, and decreasing on this interval.
Furthermore, the region $N_\gamma$ is exactly the region under the function
$z$ on the interval $(0,i_\gamma)$.
\end{lemma}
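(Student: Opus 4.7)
The plan is to analyze, for each fixed $\alpha\in(0,\gamma)$, the sign of $\beta\mapsto f(\alpha,\beta,\gamma)$ on the admissible interval $(0,\beta_{\max}(\alpha))$, where $\beta_{\max}(\alpha)=\min\{\gamma,\pi-\alpha-\gamma\}$. By Lemma~\ref{lemma:whichroot} this function has at most one zero in that interval, so $\alpha$ falls into exactly one of the categories all-positive, all-negative, negative-positive, or positive-negative. I would first handle the upper boundary: as $\beta\to\beta_{\max}(\alpha)$ the strong triangle inequality becomes strict in the limit---either the limit is isosceles with $b=c$ and the inequality reduces to $a>h$ (which is strict since $\alpha<\pi/2$), or it is an infinitesimal Euclidean triangle for which Lemma~\ref{lemma:infinitesimal} together with $\gamma<B$ gives the strict Euclidean strong triangle inequality. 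Either way the limit point belongs to $\overline{P}_\gamma$, so Corollary~\ref{corollary:nojump} forbids any admissible $\alpha$ from being all-negative or positive-negative. Hence every $\alpha\in(0,\gamma)$ is either all-positive or negative-positive.

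Next, Lemma~\ref{lemma:BakerPowers-best} (which applies since $\gamma>\Gamma$) produces a small $\alpha_0$ with $f(\alpha_0,\alpha_0,\gamma)<0$; combined with the positive boundary value and Lemma~\ref{lemma:whichroot}, the intermediate value theorem yields a unique $z_\gamma(\alpha_0)>\alpha_0$ at which $f$ vanishes, showing $\alpha_0$ is negative-positive. Let $D$ denote the set of negative-positive $\alpha$ in $(0,\gamma)$, and set $i_\gamma:=\sup D$. Continuity of $z_\gamma$ on $D$ is immediate from the closed-form expression of Lemma~\ref{lemma:formula}, and injectivity has already been noted.

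For monotonicity, $Z_\gamma$ is symmetric so $z_\gamma$ is an involution on $D$. Any continuous involution that is not the identity is strictly decreasing: a continuous strictly increasing involution would satisfy $x<z_\gamma(x)\Rightarrow z_\gamma(x)<z_\gamma(z_\gamma(x))=x$, a contradiction, and $z_\gamma(\alpha_0)>\alpha_0$ rules out the identity. Hence $z_\gamma$ is strictly decreasing on each connected component of $D$. To prove $D$ is a single interval $(0,i_\gamma)$, I would analyze the diagonal function $g(\alpha):=f(\alpha,\alpha,\gamma)$: direct inspection of Lemma~\ref{lemma:f} shows $g$ is a quadratic in $\cos^2\alpha$, and a short analysis of its endpoint values pins down exactly one zero in the admissible isosceles range $\alpha\in(0,(\pi-\gamma)/2)$. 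Since the fixed points of $z_\gamma$ on each component of $D$ are precisely the zeros of $g$, uniqueness of the zero forces $D$ to have only one component.

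The region-under-$z$ description of $N_\gamma$ then follows immediately: for $\alpha\in(0,i_\gamma)$, negative-positivity gives $\{\beta:(\alpha,\beta,\gamma)\in F,\ f<0\}=(0,z_\gamma(\alpha))$, while $\alpha\ge i_\gamma$ is all-positive and contributes nothing. I expect the main obstacle to be the connectedness step: the involution argument is elegant but only gives monotonicity on each component, and one needs the explicit diagonal analysis of $g$ (or some substitute) to rule out multiple components and thereby establish the claimed domain $(0,i_\gamma)$.
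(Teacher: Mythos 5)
Your first two steps (ruling out all-negative and positive-negative $\alpha$ via the boundary behaviour and Corollary~\ref{corollary:nojump}, then using Lemma~\ref{lemma:BakerPowers-best} to seed a negative-positive interval near $0$) match the paper's proof, and your involution argument for monotonicity is essentially the paper's ``injective continuous, hence monotone; decreasing by symmetry'' step. The genuine divergence is in the connectedness of the negative-positive set $D$, and there your argument has a real gap. The inference ``$g(\alpha)=f(\alpha,\alpha,\gamma)$ has a unique admissible zero, hence $D$ has one component'' presupposes that every component of $D$ contains a fixed point of $z_\gamma$, i.e.\ meets the diagonal in a zero of $g$. Nothing you have established forces this: the involution could carry a hypothetical second component $(a_2,a_3)$ into the first component $(0,a_1)$ rather than into itself (your own involution argument only gives monotonicity componentwise and is consistent with an increasing $z_\gamma$ swapping two components, in which case $z\circ z=\mathrm{id}$ holds with no fixed point in either); moreover a second component with $a_2>(\pi-\gamma)/2$ does not meet the admissible diagonal at all, and even where it does, $g>0$ there merely says $z_\gamma(\alpha)<\alpha$, which is not in tension with $g$ having a unique zero. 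So the unique zero of $g$ is compatible, on its face, with $D$ being disconnected.

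What actually kills a second component is the interaction of the symmetry of $f$ with the injectivity of $Z_\gamma$ along \emph{horizontal} lines, which is the mechanism the paper uses: one first upgrades the first component to show $z_\gamma$ maps $(0,i_\gamma)$ onto itself decreasingly with $z_\gamma(\alpha)\to 0$ as $\alpha\to i_\gamma^-$; then for a putative $\alpha_2>i_\gamma$ in $D$ one picks a small height $\beta_2<\min\{z(\alpha_2),\alpha'/2\}$ and observes that the line $\beta=\beta_2$ meets $Z_\gamma$ only at $(z(\beta_2),\beta_2)$ with $z(\beta_2)<i_\gamma$, so by symmetry $f(\alpha_2,\beta_2,\gamma)=f(\beta_2,\alpha_2,\gamma)>0$, contradicting $f(\alpha_2,\beta_2,\gamma)<0$ from $\beta_2<z(\alpha_2)$. (Equivalently, the paper phrases this as the whole horizontal segment lying in $N_\gamma\cup Z_\gamma$ up to a boundary point of $\overline{P}_\gamma$, contradicting Corollary~\ref{corollary:nojump}.) Your diagonal analysis of $g$ is correct as far as it goes --- $g$ really is a quadratic in $\cos^2\alpha$ with exactly one admissible zero --- but it cannot substitute for this horizontal-line argument, so the connectedness step, which you yourself flagged as the main obstacle, remains open in your write-up.
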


\emph{See Figure~\ref{fig:1-2} for illustration.}

\begin{figure}
\includegraphics{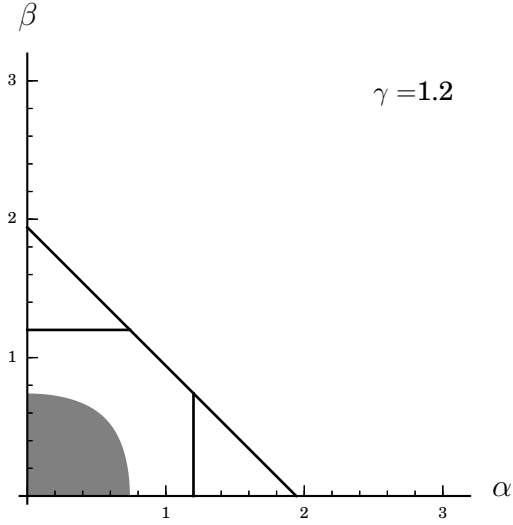}
\caption{For $\gamma=1.2$ here, the shaded region is $N_\gamma$, and the unshaded is $P_\gamma$. The boundary is $Z_\gamma$. The horizontal and vertical line segments represent $\beta=\gamma$ and $\alpha=\gamma$, beyond which it is guaranteed that the strong triangle inequality holds. The diagonal $\beta=\pi-\gamma-\alpha$ represents Euclidean triangles.\label{fig:1-2}}
\end{figure}

\begin{proof}
First note that the condition on $\gamma$ implies that every $\alpha$ is either
all-positive or negative-positive. Indeed, any other type of $\alpha$ would
give rise to a sequence of points in $N_\gamma$ converging to a point in
$\overline{P}_\gamma$. Let $C$ be the set of values of $\alpha$, for which $\alpha$ is
negative-positive. Clearly $z$ is defined on $C$.

By Lemma~\ref{lemma:BakerPowers-best}, there exists $\alpha'>0$
such that for all $0<\alpha<\alpha'$, $f(\alpha,\alpha,\gamma)<0$. This also
means that for all $0<\alpha<\alpha'$, $\alpha\in C$.

Since $\gamma < B$ it follows that the diagonal $\alpha + \beta + \gamma = \pi$ lies in $\overline{P}_\gamma$. Moreover, by Proposition~\ref{proposition:greatest}, the appropriate vertical line segment $\alpha = \gamma$  belongs to  $\overline{P}_\gamma$. Consider the open segment going right from the point $(\alpha'/2,\alpha'/2)$ and ending at the point $(x, \alpha'/2)$ such that $(x,\alpha'/2, \gamma) \in \overline{F} \setminus F$.  By Corollary~\ref{corollary:nojump},
we can not have the open segment entirely in $N_\gamma$. So there exists
$\alpha'/2<\alpha_0<\gamma$ with $(\alpha_0,\alpha'/2)\in Z_\gamma$. We also
have that $(0,\alpha_0)\subseteq C$.

By Lemma~\ref{lemma:formula}, $z$ is continuous on $(0,\alpha_0)$. Injective
continuous functions are monotone, and by symmetry again, $z$ must be monotone
decreasing on $(0,\alpha_0)$. The portion of $z$ on $(0,\alpha'/2)$ is
``copied'' to the portion after $\alpha_0$, so there exists $\alpha_1>\alpha_0$
such that $(0,\alpha_1)\subseteq C$, and $z$ is continuous, monotone decreasing
on $(0,\alpha_1)$, and $\lim_{\alpha\to\alpha_1^-}z(\alpha)=0$.

We claim that in fact $(0,\alpha_1)=C$. Suppose not, and there exists
$\alpha_2>\alpha_1$ with $\alpha_2\in C$. For all
$0<\beta_2<\min\{z(\alpha_2),\alpha'/2\}$, the horizontal line $\beta=\beta_2$
contains only one point from $Z_\gamma$. That implies that in fact the entire
open line segment between $(0,\beta_2)$ to
$(\min\{\pi-\gamma-\beta_2,\gamma\},\beta_2)$ lies in $N_\gamma\cup Z_\gamma$.
Thereby, we could construct a sequence in $N_\gamma\cup Z_\gamma$ converging to a point in
$\overline{P}_\gamma$ contrary to Corollary~\ref{corollary:nojump}. So the first part of the statement holds with
$i_\gamma=\alpha_1$.

The second part of the statement is obvious after the first part, which is
necessary to show that there is a well-defined region under the function on the
interval $(0,i_\gamma)$.
\end{proof}

For the actual computations, we will need to numerically compute the value of
$i_\gamma$. Since $i_\gamma=\lim_{\alpha\to 0^+} z(\alpha)$, and since $z$
remains continuous even if we extend the function by its formula for
$\alpha=0$, it is easy to compute its value. In fact it turns out that it has a
relatively simple formal expression:
\[
i_\gamma=\cos^{-1}\left(\frac{(\sin\gamma-1)^2+\cos\gamma}{2\sin\gamma-\cos\gamma-1}\right).
\]

Now we will start to work on the more difficult case when $\gamma\in(B,\pi/2)$.
First we need two technical lemmas.

\begin{lemma}\label{lemma:decreasing}
$f(\alpha,\beta,\gamma)$ is monotone decreasing in
$\gamma$.\label{lemma:decreasing_gamma}
\end{lemma}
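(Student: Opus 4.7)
My plan is to establish the monotonicity of $f(\alpha,\beta,\gamma)$ in $\gamma$ by a direct computation of the partial derivative and a sign analysis, exploiting the standing assumption $\max\{\alpha,\beta\} < \gamma < \pi/2$ that guarantees $\cos\alpha$, $\cos\beta$, and $\cos\gamma$ are all strictly positive. I would split $f$ into the two pieces
\[
f(\alpha,\beta,\gamma) = \cos^2\beta + u^2 - v^2,
\]
where $u = (\cos\beta\cos\gamma + \cos\alpha)/\sin\gamma$ and $v = (\cos\alpha\cos\beta + \cos\gamma)/(\cos\gamma + 1 - \sin\gamma) - 1$. Since $\cos^2\beta$ does not depend on $\gamma$, the task reduces to bounding $\partial u^2/\partial\gamma - 2v\,\partial v/\partial\gamma$.

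A quotient-rule computation, combined with $\sin^2\gamma + \cos^2\gamma = 1$, collapses the first derivative to
\[
\frac{\partial u}{\partial\gamma} = -\frac{\cos\beta + \cos\alpha\cos\gamma}{\sin^2\gamma},
\]
which is strictly negative on our domain. Since $u > 0$, this yields $\partial u^2/\partial\gamma < 0$. A similar calculation gives
\[
\frac{\partial v}{\partial\gamma} = \frac{(1-\sin\gamma) + \cos\alpha\cos\beta(\sin\gamma+\cos\gamma)}{(\cos\gamma + 1 - \sin\gamma)^2},
\]
which is strictly positive because $\sin\gamma < 1$ and $\sin\gamma + \cos\gamma > 0$ on $(0,\pi/2)$.

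To assemble these into $\partial f/\partial\gamma < 0$ I need $v > 0$, so that $2v\,\partial v/\partial\gamma$ is non-negative and can be subtracted from an already-negative term. This is exactly the content of Lemma~\ref{lemma:nonnegative}, whose hypothesis $\gamma > \max\{\alpha,\beta\}$ is precisely our standing assumption. This is the only step with any genuine delicacy: if $v$ could be negative, the two derivatives would pull in opposite directions relative to $f$ and the monotonicity would be much more subtle to establish. Lemma~\ref{lemma:nonnegative} removes this obstruction cleanly, so the two sign contributions combine to give $\partial f/\partial\gamma < 0$, and the lemma follows.
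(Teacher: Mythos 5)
Your proof is correct and follows essentially the same route as the paper: the same decomposition $f=\cos^2\beta+f_1^2-f_2^2$, the same sign computation of the two partial derivatives, and the same appeal to Lemma~\ref{lemma:nonnegative} to guarantee $f_2>0$. Your simplified form $\partial f_1/\partial\gamma=-(\cos\beta+\cos\alpha\cos\gamma)/\sin^2\gamma$ is in fact the correct one (the paper's displayed numerator contains a typo, $\sin^2\beta$ where $\sin^2\gamma$ is meant), and the sign conclusion is unaffected.
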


\begin{proof}
Let
\[
f_1(\alpha,\beta,\gamma)=\frac{\cos\beta\cos\gamma+\cos\alpha}{\sin\gamma},\qquad
f_2(\alpha,\beta,\gamma)=\frac{\cos\alpha\cos\beta+\cos\gamma}
{\cos\gamma+1-\sin\gamma}-1.
\]
Then
\[
f(\alpha,\beta,\gamma)=\cos^2\beta+[f_1(\alpha,\beta,\gamma)]^2-[f_2(\alpha,\beta,\gamma)]^2.
\]
Simple computations show
\begin{align*}
\frac{\partial f_1}{\partial\gamma}
&=\frac{-\cos\beta\sin^2\beta-(\cos\beta\cos\gamma+\cos\alpha)\cos\gamma}
{\sin^2\gamma}<0\\
\frac{\partial f_2}{\partial\gamma}&=\frac
{1-\sin\gamma+\cos\alpha\cos\beta(\sin\gamma+\cos\gamma)}
{(\cos\gamma+1-\sin\gamma)^2}>0.
\end{align*}
Since for all $(\alpha,\beta,\gamma)\in F$, clearly $f_1>0$, and by
Lemma~\ref{lemma:nonnegative}, $f_2>0$, we get that
\[
\frac{\partial f}{\partial\gamma}=2f_1\frac{\partial f_1}{\partial\gamma}-2f_2\frac{\partial f_2}{\partial\gamma}<0.
\]
\end{proof}

\begin{lemma}\label{lemma:isosceles}
Let $\gamma\in[B,\pi/2)$. Then all isosceles triangles with angles
$\alpha$, $\alpha$, and $\gamma$ fail the strong triangle inequality.
Furthermore, these triangles fail with inequality, that is,
$(\alpha,\alpha,\gamma)\in N_\gamma$.
\end{lemma}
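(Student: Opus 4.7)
My plan is to use strict monotonicity of $f$ in $\gamma$ (Lemma~\ref{lemma:decreasing}) to reduce the claim to the boundary case $\gamma = B$. For any $\gamma \in (B, \pi/2)$ and any $\alpha$ admissible for $\gamma$ (so $\alpha < (\pi-\gamma)/2 < (\pi-B)/2$, hence also admissible for $\gamma = B$), strict decrease yields $f(\alpha, \alpha, \gamma) < f(\alpha, \alpha, B)$. Consequently, once I establish that $f(\alpha, \alpha, B) < 0$ for every $\alpha \in (0, (\pi-B)/2)$, the same strict inequality propagates to all $\gamma \in [B, \pi/2)$, which is exactly the claim that $(\alpha, \alpha, \gamma) \in N_\gamma$.

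To handle $\gamma = B$, I would set $\alpha = \beta$ in Lemma~\ref{lemma:f} and simplify using $\sin^2\gamma = (1-\cos\gamma)(1+\cos\gamma)$ to obtain
\[ f(\alpha, \alpha, \gamma) = \frac{2\cos^2\alpha}{1-\cos\gamma} - \left(\frac{\cos^2\alpha-(1-\sin\gamma)}{\cos\gamma+1-\sin\gamma}\right)^2. \]
Setting $u = \cos^2\alpha$, the condition $f(\alpha,\alpha,\gamma) < 0$ becomes a quadratic inequality $Q(u) > 0$ in $u$ with positive leading coefficient $1-\cos\gamma$. The essential algebraic simplification (using $\cos^2\gamma + \sin^2\gamma = 1$ in the form $(1-\sin\gamma)^2 + \cos^2\gamma = 2(1-\sin\gamma)$) collapses the middle coefficient to $-2(1-\sin\gamma)(3+\cos\gamma)$; this makes the discriminant $32(1-\sin\gamma)^2(1+\cos\gamma)$ a perfect square, giving closed-form roots.

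Plugging in the Bailey--Bannister values $\cos B = 7/25$ and $\sin B = 24/25$, the two roots come out as $1/225$ and $9/25$. The larger root equals $\sin^2(B/2) = (1-\cos B)/2 = 9/25$, which is exactly $\cos^2\alpha$ at the Euclidean limit $\alpha = (\pi-B)/2$; this coincidence is forced by the Euclidean Bailey--Bannister equality $a+b = c+h$ for isosceles triangles with apex angle $B$. Since admissible $\alpha$ at $\gamma = B$ corresponds to $u \in (9/25, 1)$, which sits strictly above both roots, $Q(u) > 0$ throughout and hence $f(\alpha, \alpha, B) < 0$. The main obstacle I expect is spotting the identity that makes the discriminant a perfect square; without recognizing that the larger root equals $\sin^2(B/2)$, the tangency with the admissible boundary would not stand out, and the computation would look like a lucky coincidence rather than an inevitable match-up with the Euclidean case in the limit $\alpha+\alpha+\gamma \to \pi$.
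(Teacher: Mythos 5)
Your proof is correct, and its skeleton --- verify the claim at $\gamma=B$, then propagate to all $\gamma\in(B,\pi/2)$ via the monotonicity of $f$ in $\gamma$ (Lemma~\ref{lemma:decreasing}) --- is exactly the paper's. The only difference is how the base case is computed: the paper substitutes $\alpha=\beta$ and $\gamma=B$ into Lemma~\ref{lemma:BakerPowers} together with the right-triangle identity $\cosh h=\cos\alpha/\sin(B/2)=\frac{5}{3}\cos\alpha$, obtaining a quadratic in $\cos\alpha$ whose relevant root is $3/5$ and concluding that satisfying the inequality would force $\alpha+\beta+\gamma\geq 2\cos^{-1}(3/5)+\tan^{-1}(24/7)=\pi$, whereas your direct quadratic in $u=\cos^2\alpha$ built from Lemma~\ref{lemma:f} arrives at the same tangency, $u=9/25=\sin^2(B/2)$, at the Euclidean boundary.
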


\begin{proof}
We will use Lemma~\ref{lemma:BakerPowers} with $\alpha=\beta$ and $\gamma=B$.
In that case, $\cos\gamma=7/25$ and $\sin\gamma=24/25$; also $\cosh
h=\cos\alpha/\sin(B/2)=\frac{5}{3}\cos\alpha$. It is elementary to see that to
satisfy the inequality of the lemma, even with equality, $\cos\alpha\leq 3/5$
is necessary, so $\alpha,\beta\geq\cos^{-1}(3/5)$, and then
$\alpha+\beta+\gamma\geq2\cos^{-1}(3/5)+\tan^{-1}(24/7)=\pi$.

So if $\gamma=B$, then all
$(\alpha,\alpha)\in N_\gamma$, and by
Lemma~\ref{lemma:decreasing}, this remains true for $\gamma>B$.
\end{proof}

\begin{lemma}\label{lemma:overB}
Let $\gamma\in[B,\pi/2)$.  Then the set of values of $\alpha$ for which
$\alpha$ is negative-positive is the union of two open intervals
$(0,e_\gamma)$ and $(\pi-\gamma-e_\gamma,i_\gamma)$, and $z$ is continuous and
decreasing on these intervals. No value $\alpha$ is
positive-negative. Furthermore, the region $N_\gamma$ is the region
under $z$ and under the line $\alpha+\beta+\gamma=\pi$.
\end{lemma}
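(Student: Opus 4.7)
The plan is to adapt the proof of Lemma~\ref{lemma:underB} to the regime $\gamma \geq B$, where two new phenomena intervene: by Lemma~\ref{lemma:isosceles}, the entire diagonal $\alpha = \beta$ lies in $N_\gamma$, so the graph of $z$ can no longer cross it; and by Bailey--Bannister, part of the Euclidean diagonal $\alpha + \beta + \gamma = \pi$ now lies in $\overline{N}_\gamma$. Together these force the domain of $z$ to split into two symmetric arcs separated by a central strip where $f$ is all-negative.

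First I would establish that on each connected component of its domain, $z$ is continuous by Lemma~\ref{lemma:formula} and injective by Lemma~\ref{lemma:whichroot} (applied with $\alpha, \beta$ swapped), hence monotone. An increasing $z$ would be a continuous involution via the symmetry $z = z^{-1}$ and therefore equal to the identity, contradicting Lemma~\ref{lemma:isosceles}; so $z$ must be strictly decreasing with graph disjoint from $\beta = \alpha$.

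Next, mimicking Lemma~\ref{lemma:underB}, Lemma~\ref{lemma:BakerPowers-best} provides $\alpha' > 0$ with $(\alpha, \alpha) \in N_\gamma$ for $\alpha < \alpha'$, and a horizontal sweep at height $\alpha'/2$ must cross $Z_\gamma$ by Corollary~\ref{corollary:nojump}, since its right endpoint lies in $\overline{P}_\gamma$---either on the wall $\alpha = \gamma$ (Proposition~\ref{proposition:greatest}) or on the Euclidean diagonal with $\beta$ small, where a thin Euclidean triangle of largest angle $\gamma < \pi/2$ satisfies the strong triangle inequality (as a direct computation gives $(a+b-c)/h \to \cot(\gamma/2) > 1$ as $\beta \to 0$). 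This yields a maximal open interval $(0, e_\gamma)$ on which $z$ is defined, continuous, and decreasing. Since $z$ cannot cross $\beta = \alpha$, its limit as $\alpha \to e_\gamma^-$ must be a point of $\partial\overline{F}$; Corollary~\ref{corollary:nojump} excludes the walls $\alpha = \gamma$ and $\beta = \gamma$ (which lie in $\overline{P}_\gamma$), so termination occurs on the Euclidean diagonal, giving $(e_\gamma, \pi - \gamma - e_\gamma) \in \overline{Z}_\gamma$. Applying the symmetry $z = z^{-1}$ then yields the second arc on $(\pi - \gamma - e_\gamma, i_\gamma)$.

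Finally, for $\alpha$ in the middle interval $(e_\gamma, \pi - \gamma - e_\gamma)$: when $\alpha < (\pi - \gamma)/2$, the slice contains $(\alpha, \alpha) \in N_\gamma$ by Lemma~\ref{lemma:isosceles} and admits no zero of $f$ inside $F$ (any such zero would have to be $z(\alpha)$, undefined here), so the slice is all-negative, and the case $\alpha \geq (\pi - \gamma)/2$ follows by symmetry of $N_\gamma$. The same uniqueness-of-root reasoning rules out positive-negative $\alpha$. The hardest step is pinning down the termination of the first arc precisely on the Euclidean diagonal; this requires a careful topological argument combining Corollary~\ref{corollary:nojump} on $\overline{F}$ with the no-crossing property of the diagonal $\alpha = \beta$.
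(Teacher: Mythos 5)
Your overall architecture parallels the paper's, but two steps contain genuine gaps. The more serious one is the monotonicity argument. You claim that an increasing $z$ would be a continuous involution and hence the identity. That conclusion requires the branch to map into itself, whereas the branch over $(0,e_\gamma)$ has its image inside $(\pi-\gamma-e_\gamma,i_\gamma)$, which for $\gamma>B$ is disjoint from $(0,e_\gamma)$. A symmetric, injective, continuous relation whose graph avoids the diagonal can perfectly well be increasing on each component: $\{(x,x+1):x\in(0,1)\}\cup\{(x,x-1):x\in(1,2)\}$ is an involution, increasing on each piece, and not the identity. So ``increasing $\Rightarrow$ identity'' fails in exactly the regime you are treating. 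The paper circumvents this by first proving decreasingness at $\gamma=B$, where $e_B=(\pi-B)/2$, the two components abut, and the branch does map into itself (so the involution argument is valid there), and then propagating decreasingness to all $\gamma\in(B,\pi/2)$ using the continuity of $z_\gamma(\alpha)$ in $\gamma$ together with the Intermediate Value Theorem: a flip of monotonicity would force $z_{\gamma_0}(\alpha_1)=z_{\gamma_0}(\alpha_2)$ at some intermediate $\gamma_0$, contradicting injectivity. You need some substitute for this step.

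The second gap is the middle strip. To show every $\alpha\in(e_\gamma,\pi-\gamma-e_\gamma)$ is all-negative you argue that any zero of $f(\alpha,\cdot,\gamma)$ ``would have to be $z(\alpha)$, undefined here''---but whether $z$ is defined there is precisely what is at stake; maximality of the component $(0,e_\gamma)$ does not exclude further components of the domain of $Z_\gamma$ inside the strip, so the argument is circular. The paper closes this with a separate device: it introduces the region $T=\{(\alpha,\beta):e_\gamma\le\alpha,\beta\le\pi-\gamma-e_\gamma,\ \alpha+\beta+\gamma<\pi\}$ and shows $T\subseteq N_\gamma\cup Z_\gamma$, since a point of $P_\gamma$ in the interior of $T$ with $\alpha>\beta$ would be flanked on its horizontal line by points of $N_\gamma$, yielding two zeros with the same $\beta$ and contradicting the fact that $Z_\gamma$ is a function. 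The paper also leans on the characterization from \cite{Fai-Pow-Sah-13} of the failure set on the Euclidean diagonal as a closed symmetric segment with endpoints $(e_\gamma,\pi-\gamma-e_\gamma)$ and $(\pi-\gamma-e_\gamma,e_\gamma)$, which is what actually pins the termination of the first arc to the diagonal. Your thin-triangle computation $(a+b-c)/h\to\cot(\gamma/2)>1$ is correct and serves to locate a point of $\overline{P}_\gamma$, but by itself it does not identify where the arc ends, and the claim that the limit of $z$ at the right end of the maximal interval must lie on $\partial\overline{F}$ also needs the uniqueness-of-root and no-tangency reasoning that the $T$ argument supplies.
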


\emph{See Figure~\ref{fig:1-3} for illustration.}

\begin{figure}
\includegraphics{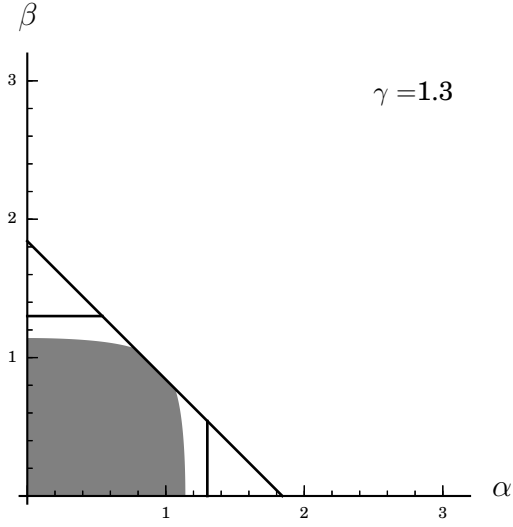}
\caption{Similar to Figure~\ref{fig:1-2}, the shaded region represents $N_\gamma$, this time for $\gamma=1.3$. See caption of Figure~\ref{fig:1-2} for additional explanation of features.\label{fig:1-3}}
\end{figure}

\begin{proof}
Let $\gamma\in[B,\pi/2)$. By \cite{Bai-Ban-97}, there exists
$(\alpha,\beta)\not\in\overline{P}_\gamma$ on the diagonal $\alpha+\beta+\gamma=\pi$.
It is implicit in \cite{Fai-Pow-Sah-13} that the set $\{(\alpha,\beta)\not\in
\overline{P}_\gamma:\alpha+\beta+\gamma=\pi\}$ is a closed line segment of the line
$\alpha+\beta+\gamma=\pi$, and the endpoints of this line segment are the only
points of $\overline{Z}_\gamma$ of the line. Also, this line segment is symmetric in
$\alpha$ and $\beta$. Let the two endpoints of the line segment have
coordinates $(e_\gamma,\pi-\gamma-e_\gamma)$, and
$(\pi-\gamma-e_\gamma,e_\gamma)$.

By Lemma~\ref{lemma:isosceles}, the open line segment from $(0,0)$ to
$(\frac{\pi-\gamma}{2},\frac{\pi-\gamma}{2})$ is entirely in $N_\gamma$. Let
\[
T=\{(\alpha,\beta):e_\gamma\leq\alpha,\beta\leq\pi-\gamma-e_\gamma\text{ and
}\alpha+\beta+\gamma<\pi\}.
\]
We will show that $T\subseteq N_\gamma\cup Z_\gamma$.
Indeed, suppose a point $(\alpha,\beta)$ in the interior of $T$ belongs to
$P_\gamma$. Without loss of generality $\alpha>\beta$. Then
there are $\alpha_0<\alpha<\alpha_1$ with $(\alpha_0,\beta),(\alpha_1,\beta)\in
T\cap N_\gamma$, and so by continuity, there are $\alpha_0'$ and
$\alpha_1'$ with $(\alpha_0',\beta),(\alpha_1',\beta)\in T\cap Z_\gamma$,
contradicting the fact that $Z_\gamma$ is a function. The statement for the
boundary of $T$ follows from Corollary~\ref{corollary:nojump}.

If $\alpha<e_\gamma$, then $\alpha$ is negative-positive. This is because
$(\alpha,\alpha)\in N_\gamma$ and $(\alpha,\min\{\pi-\gamma-\alpha,\gamma\})\in \overline{P}_\gamma$. So
$z_\gamma(\alpha)$ is defined on $(0,e_\gamma)$, and therefore it is continuous
on this interval.

Now we will show that $\lim_{\alpha\to e_\gamma^-}z(\alpha)=\pi-\gamma-e_\gamma$.
If this is not true, there is $\epsilon>0$ and a sequence
$\alpha_1,\alpha_2,\ldots$ with $\alpha_n\to e_\gamma$ such that
$z(\alpha_n)<\pi-\gamma-e_\gamma-\epsilon$. Let
$\beta_n=\pi-\gamma-e_\gamma-\epsilon/2$ (a constant sequence). Now the sequence $(\alpha_n,\beta_n)$
converges to the point $(e_\gamma,\pi-\gamma-e_\gamma-\epsilon/2)$, so a
sequence of points in $P_\gamma$, converges to a point in $N_\gamma\cup
Z_\gamma$. The only way this can happen if
$(e_\gamma,\pi-\gamma-e_\gamma-\epsilon/2)\in Z_\gamma$. But the argument can
be repeated with $\epsilon/3$ instead of $\epsilon/2$, so
$(e_\gamma,\pi-\gamma-e_\gamma-\epsilon/3)\in Z_\gamma$, and this contradicts
the fact that $Z_\gamma$ is a function.

Since $z(\alpha)$ is continuous and bijective on $(0,e_\gamma)$, it is
monotone. We will show it must be decreasing. First we note that for
$\gamma=B$, $z$ is clearly decreasing, because in that case
$e_\gamma=\frac{\pi-\gamma}{2}$, and by symmetry, the function is ``copied
over'' to the interval $(e_\gamma,i_\gamma)$, so it can not be increasing and
bijective. Then, since $f$ is continuous, $z_\gamma(\alpha)$ is continuous in
$\gamma$, so if $z_\gamma(\alpha_1)>z_\gamma(\alpha_2)$ for some
$\alpha_1<\alpha_2$ and $z_{\gamma'}(\alpha_1)<z_{\gamma'}(\alpha_2)$ for some
$\gamma'>\gamma$, then by the Intermediate Value Theorem, there is a
$\gamma<\gamma_0<\gamma'$ for which
$z_{\gamma_0}(\alpha_1)=z_{\gamma_0}(\alpha_2)$, a contradiction. Informally
speaking, the function $z$ can not flip its monotonicity without failing injectivity at some point.

We have already seen that $\alpha$ is negative-positive on $(0,e_\gamma)$. By
the fact that $z$ is decreasing on this interval, it is implied that $\alpha$
is all-negative on $[e_\gamma,\pi-\gamma-e_\gamma]$, and $\alpha$ is again
negative-positive on $(\pi-\gamma-e_\gamma,i_\gamma)$. Finally, $\alpha$ is
all-positive on $[i_\gamma,\gamma)$.

The last statement of the lemma is now clear.
\end{proof}

For the actual computations, we will need the value of $e_\gamma$. From
\cite{Fai-Pow-Sah-13}, which describes the equality case for Euclidean
geometry, we know that $e_\gamma$ is the value of $\alpha$ for which
\[
\tan\left(\frac\alpha2\right)+\tan\left(\frac\beta2\right)=1,
\]
and since the triangle is Euclidean, we have $\alpha/2+\beta/2=\pi/2-\gamma/2$.
These equations yield two symmetric solutions for $\alpha$ and $\beta$; by our
choice in the lemma, we need the smaller of these. We conclude
\[
e_\gamma=2\tan^{-1}\left(\frac{1}{2}-\sqrt{\tan\left(\frac{\gamma}{2}\right)-\frac{3}{4}}\right).
\]
For the proof of the next result we let
\[ \overline{S} = \{(\alpha, \beta, \gamma) \in F : f(\alpha, \beta, \gamma) \leq 0\}\]
and note that $\overline{S}$ is the set of points in $F$ where the strong triangle inequality fails.

\begin{theorem}\label{thm:probability}
The probability that the strong triangle inequality holds is
\begin{multline*}
\frac{7}{8}-
\frac{6}{\pi^3}
\bigg(
\int_{\Gamma}^{B}\int_0^{i_\gamma} z_\gamma(\alpha)\,d\alpha d\gamma+\\
\int_{B}^{\pi/2}\frac{(\pi-\gamma-2e_\gamma)^2}{2}-e_\gamma^2+
2\int_0^{e_\gamma} z_\gamma(\alpha)\,d\alpha d\gamma
\bigg).
\end{multline*}
\end{theorem}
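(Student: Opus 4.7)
The plan is to compute $1-P(\text{success})$ by splitting the failure region in the parameter tetrahedron $T=\{(\alpha,\beta,\gamma)\in(0,\pi)^3:\alpha+\beta+\gamma<\pi\}$ into a part handled by the simple observations of Section~\ref{section:observations} and the genuinely hyperbolic part $\overline{S}\subseteq F$. By Proposition~\ref{proposition:greatest} the strong triangle inequality holds whenever $\gamma$ is not the unique largest angle, and by Proposition~\ref{proposition:obtuse} it fails on the entire slab $\{\gamma\geq\pi/2\}\cap T$ (on which $\gamma$ is automatically the unique largest angle, since $\alpha+\beta<\pi/2\leq\gamma$). Hence, up to a set of measure zero, the failure region is the disjoint union of $\{\gamma\geq\pi/2\}\cap T$ and $\overline{S}$. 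A direct cross-sectional integration gives $\vol(\{\gamma\geq\pi/2\}\cap T)=\int_{\pi/2}^{\pi}\frac{(\pi-\gamma)^2}{2}\,d\gamma=\frac{\pi^3}{48}$, which contributes a failure probability of $1/8$. Consequently
\[
P(\text{success}) \;=\; \frac{7}{8}\;-\;\frac{6}{\pi^3}\,\vol(\overline{S}),
\]
and it suffices to identify $\vol(\overline{S})=\int_0^{\pi/2}\mu(N_\gamma)\,d\gamma$ with the bracketed expression in the theorem.

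For $\gamma\in(0,\Gamma]$ we have $N_\gamma=\emptyset$ by the defining property of the Baker--Powers constant, contributing nothing. For $\gamma\in(\Gamma,B)$, Lemma~\ref{lemma:underB} identifies $N_\gamma$ as the region under $z_\gamma$ over $(0,i_\gamma)$, producing the first iterated integral in the theorem. The substantive work is in the range $[B,\pi/2)$, where Lemma~\ref{lemma:overB} decomposes $N_\gamma$ into three vertical strips: the outer strips $\alpha\in(0,e_\gamma)$ and $\alpha\in(\pi-\gamma-e_\gamma,i_\gamma)$, each capped above by $z_\gamma$, and the middle strip $\alpha\in[e_\gamma,\pi-\gamma-e_\gamma]$, capped above by the Euclidean diagonal $\beta=\pi-\gamma-\alpha$. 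Before using this decomposition I would check that the diagonal constraint $\beta<\pi-\gamma-\alpha$ does not bind on the outer strips, which follows from the monotonicity of $z_\gamma$ together with $\lim_{\alpha\to e_\gamma^-}z_\gamma(\alpha)=\pi-\gamma-e_\gamma$ and $\lim_{\alpha\to 0^+}z_\gamma(\alpha)=i_\gamma<\gamma<\pi-\gamma$.

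The main calculation is then folding the three strip contributions into the compact form in the theorem. The middle strip yields $\frac{(\pi-\gamma-e_\gamma)^2-e_\gamma^2}{2}$ by direct integration. For the third strip, the symmetry $Z_\gamma=Z_\gamma^{-1}$ forces $z_\gamma|_{(\pi-\gamma-e_\gamma,i_\gamma)}$ to be the functional inverse of $z_\gamma|_{(0,e_\gamma)}$, so the substitution $u=z_\gamma(\alpha)$ together with integration by parts gives
\[
\int_{\pi-\gamma-e_\gamma}^{i_\gamma} z_\gamma(\alpha)\,d\alpha \;=\; \int_0^{e_\gamma} z_\gamma(u)\,du \;-\; e_\gamma(\pi-\gamma-e_\gamma).
\]
Summing all three contributions and applying the elementary identity $\frac{(\pi-\gamma-e_\gamma)^2-e_\gamma^2}{2}-e_\gamma(\pi-\gamma-e_\gamma)=\frac{(\pi-\gamma-2e_\gamma)^2}{2}-e_\gamma^2$ reproduces exactly the formula for $\mu(N_\gamma)$ stated in the theorem. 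The only step requiring more than routine bookkeeping is the integration-by-parts reduction for the third strip, since the rest rests squarely on the structural lemmas already in place.
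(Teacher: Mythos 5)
Your proposal is correct and follows essentially the same route as the paper: isolate the $\gamma\geq\pi/2$ slab of volume $\pi^3/48$, and integrate $\mu(N_\gamma)$ over $(\Gamma,B)$ and $(B,\pi/2)$ using Lemmas~\ref{lemma:underB} and~\ref{lemma:overB}. The only difference is that you explicitly carry out the three-strip decomposition and the inverse-function/integration-by-parts identity $\int_{\pi-\gamma-e_\gamma}^{i_\gamma}z_\gamma\,d\alpha=\int_0^{e_\gamma}z_\gamma\,du-e_\gamma(\pi-\gamma-e_\gamma)$, which the paper compresses into the phrase ``by Lemma~\ref{lemma:overB} and symmetry''; your computation checks out and matches the stated formula.
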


\begin{proof}
We break up the integral
\begin{equation}\label{eq:vol}
\int_\Gamma^{\pi/2}\mu(N_\gamma)\ d\gamma
\end{equation}
over two intervals: $(\Gamma,B)$ and
$(B,\pi/2)$. By Lemma~\ref{lemma:underB}, in the former interval, $N_\gamma$ is
the region under the function $z_\gamma$. So if $\gamma\in(\Gamma,B)$, then
$\int_\Gamma^B \mu(N_\gamma)\ d\gamma=\int_\Gamma^B\int_0^{i_\gamma}
z_\gamma(\alpha)\,d\alpha\ d\gamma$. If $\gamma\in(B,\pi/2)$, then, by
Lemma~\ref{lemma:overB} and symmetry,
\[
\int_B^{\pi/2}\mu(N_\gamma)\ d\gamma=\int_B^{\pi/2}\left(2\int_0^{e_\gamma}z_\gamma(\alpha)\,d\alpha-e_\gamma^2+(\pi-\gamma-2e_\gamma)^2/2\right)\ d\gamma.
\]
Thus,
\begin{multline*}
\vol(\overline{S})=
\int_{\Gamma}^{B}\int_0^{i_\gamma} z_\gamma(\alpha)\,d\alpha d\gamma\ +\\
\int_{B}^{\pi/2}\left(\frac{(\pi-\gamma-2e_\gamma)^2}{2}-e_\gamma^2+
2\int_0^{e_\gamma} z_\gamma(\alpha)\,d\alpha \right) d\gamma.
\end{multline*}
By Proposition~\ref{proposition:obtuse}, the strong triangle inequality does not hold if $\gamma \geq \frac{\pi}{2}$. The volume of the tetrahedron for $\gamma\geq\pi/2$ is $\pi^3/48$. Since the volume of the tetrahedron with $\gamma \geq 0$ is $\pi^3/6$ it follows that the required probability is
\[
1-\left(\frac{\vol(\overline S)}{\pi^3/6} + \frac{1}{8}\right),
\]
and the formula follows.
\end{proof}

\section{Theoretical error estimates}

We are almost ready to use our favorite computer algebra system to compute the
actual number. However, numerical integration will not guarantee accurate
results in general. To make sure that we can (theoretically) control the error
of computation, we need one more theorem.

\begin{theorem}
The volume of $\overline{S}$ may be approximated by arbitrary precision.
More precisely, for all $\epsilon>0$ there is an algorithm to compute a
numerical upper bound $M$ and a lower bound $m$ such that $m <\vol(\overline{S}) < M$
and $M-m<\epsilon$.
\end{theorem}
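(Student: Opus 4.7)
The plan is to decompose $\vol(\overline S)$ according to the formula of Theorem~\ref{thm:probability} into three pieces: the double integral $I_1=\int_\Gamma^B\int_0^{i_\gamma} z_\gamma(\alpha)\,d\alpha\,d\gamma$; the elementary integral $A=\int_B^{\pi/2}\bigl(\tfrac{1}{2}(\pi-\gamma-2e_\gamma)^2-e_\gamma^2\bigr)\,d\gamma$; and the double integral $I_2=2\int_B^{\pi/2}\int_0^{e_\gamma} z_\gamma(\alpha)\,d\alpha\,d\gamma$. The constants $\Gamma,B$ and the functions $i_\gamma,e_\gamma,z_\gamma(\alpha)$ are all given by explicit closed-form expressions in elementary functions (Lemma~\ref{lemma:formula} together with the formulas displayed after Lemmas~\ref{lemma:underB} and~\ref{lemma:overB}), so each value can be rigorously enclosed in an arbitrarily tight interval by standard interval arithmetic. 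To deliver $M-m<\epsilon$ it is enough to produce enclosures of width less than $\epsilon/3$ for each of the three pieces.

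I would first bound the inner integrals $\int_0^{L(\gamma)} z_\gamma(\alpha)\,d\alpha$ with $L(\gamma)\in\{i_\gamma,e_\gamma\}$. By Lemmas~\ref{lemma:underB} and~\ref{lemma:overB}, for each fixed $\gamma$ the function $z_\gamma$ is continuous and monotone decreasing on the closed interval $[0,L(\gamma)]$ (with the appropriate continuous extension at the endpoints), and its range lies in $[0,\pi/2)$. Partitioning $[0,L(\gamma)]$ into $n$ equal subintervals therefore yields upper and lower Riemann sums that sandwich the true value and whose gap is at most $L(\gamma)\cdot V_\gamma/n\le(\pi/2)^2/n$, where $V_\gamma$ is the total variation of $z_\gamma$. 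At each partition node the value $z_\gamma(\alpha)$ is enclosed rigorously by evaluating the formula of Lemma~\ref{lemma:formula} in interval arithmetic.

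For the outer integration in $\gamma$, I would partition $[\Gamma,B]$ and $[B,\pi/2]$ into $m$ equal subintervals. On each subinterval $[\gamma_j,\gamma_{j+1}]$ I would compute upper and lower enclosures for $\int_0^{L(\gamma)}z_\gamma(\alpha)\,d\alpha$ that hold simultaneously for every $\gamma$ in the subinterval; this is obtained by running the procedure of the previous paragraph with the full interval $[\gamma_j,\gamma_{j+1}]$ fed in wherever the symbol $\gamma$ appears. Multiplying each enclosure by the subinterval width $(B-\Gamma)/m$ or $(\pi/2-B)/m$ and summing produces rigorous enclosures for $I_1$ and $I_2$. The integrand of $A$ is elementary, so the same outer procedure, without any inner integral, handles it.

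The hard part is showing that the enclosure widths actually drop below $\epsilon/3$ for $m,n$ large. For the inner integrals this is immediate from the $O(1/n)$ monotonicity bound above. For the outer integrals it reduces to a uniform continuity argument: $z_\gamma(\alpha)$ is jointly continuous in $(\alpha,\gamma)$ on a compact set and $L(\gamma)$ is continuous, so $\gamma\mapsto\int_0^{L(\gamma)} z_\gamma(\alpha)\,d\alpha$ is uniformly continuous on $[\Gamma,B]$ and on $[B,\pi/2]$, which forces the interval enclosure on each $[\gamma_j,\gamma_{j+1}]$ to shrink to the pointwise enclosure as $m\to\infty$. The one delicate point is that $z_\gamma$ may have unbounded $\alpha$-derivative near $\alpha=L(\gamma)$, but monotonicity rescues us: the Riemann-sum bound is $O(1/n)$ without any Lipschitz estimate, so no separate treatment of the endpoint is required.
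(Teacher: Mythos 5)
Your argument reaches the same conclusion but handles the outer ($\gamma$) integration by a genuinely different mechanism than the paper. The paper's entire proof rests on Lemma~\ref{lemma:decreasing}: since $f$ is monotone decreasing in $\gamma$, the measure $\mu(N_\gamma)$ is monotone increasing in $\gamma$, so the left and right Riemann sums of the outer integral automatically bracket its value; combined with the monotonicity of $z_\gamma$ in $\alpha$ for the inner integrals (which you also exploit), every quantity is sandwiched between Riemann sums of monotone functions and no continuity modulus is ever needed. You instead enclose the inner integral uniformly over each $\gamma$-subinterval by interval arithmetic and invoke uniform continuity of $\gamma\mapsto\int_0^{L(\gamma)}z_\gamma(\alpha)\,d\alpha$ to force the enclosures to shrink. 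This is more general --- it would survive even if $\mu(N_\gamma)$ were not monotone --- but it carries two burdens the paper avoids. First, termination of your refinement loop is guaranteed only by a non-effective compactness argument, whereas the paper's scheme yields an explicit two-sided bracket at every resolution. Second, and more delicately, ``standard interval arithmetic'' applied to the formula of Lemma~\ref{lemma:formula} is not automatic: the expression divides by $2a$ with $a=\csc^2\gamma-\left(\cos\alpha/(\cos\gamma+1-\sin\gamma)\right)^2$, and Lemma~\ref{lemma:whichroot} explicitly allows $a=0$, a removable singularity where the quadratic degenerates and the correct root becomes $-c/b$. Any input box whose image under $a$ contains $0$ breaks the naive enclosure, so you would need either to verify that $a$ is bounded away from $0$ on the relevant domain or to handle that locus separately. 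Neither point is fatal, but both are work that the paper's single monotonicity observation renders unnecessary.
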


\begin{proof}
Lemma~\ref{lemma:decreasing_gamma} implies that in (\ref{eq:vol}) we
integrate a monotone increasing function, because $\mu(N_\gamma)$ is the
measure of the level set of $f$ at
$\gamma$.  Recall that for a monotone decreasing (respectively, increasing)
function, the left Riemann sum overestimates (underestimates) the integral, and
the right Riemann sum underestimates (overestimates) it. That is, it is
possible to know how precise the the numerical estimate is, and if necessary,
it is possible to repeat the computation with higher resolution.

In the actual computation given by Theorem~\ref{thm:probability}, both terms
in the parenthesis involve computations of integrals of monotone
functions, and the inner integrals in those terms are also computing integrals
of monotone functions. So, in essence, the numerical computation
involves the integration of a monotone increasing function, whose values may be
approximated at arbitrary precision.
\end{proof}

\section{Conclusion}

We can now use Theorem~\ref{thm:probability} and the computer algebra system Sage to get the following result.

\begin{corollary}
Under the assumption that $\alpha, \beta, \gamma$ can be chosen uniformly in the interval $(0,\pi)$  and $\alpha+\beta+\gamma<\pi$, the strong triangle inequality $a + b > c + h$ holds approximately 78.67\% of the time.
\end{corollary}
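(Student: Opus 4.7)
The plan is to turn Theorem~\ref{thm:probability} into a concrete numerical calculation, since every ingredient in the formula is now explicit. The Bailey--Bannister constant $B=\arctan(24/7)$ is elementary; the Baker--Powers constant $\Gamma$ can be computed to arbitrary precision in Sage by bisection (or certified root-finding) on the explicit function $f(\gamma)=-1-\cos\gamma+\sin\gamma+\sin(\gamma/2)\sin\gamma$, which is smooth and changes sign once in $[0,\pi/2]$. For each fixed $\gamma$, the endpoints $i_\gamma$ and $e_\gamma$ are available in closed form (recorded immediately after Lemmas~\ref{lemma:underB} and \ref{lemma:overB}), and the integrand $z_\gamma(\alpha)$ is given by the explicit formula in Lemma~\ref{lemma:formula}. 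Thus the whole probability reduces to the evaluation of two ordinary two-dimensional integrals with computable integrands.

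First I would implement $z_\gamma(\alpha)$ in Sage via the coefficients $a,b,c$ from Lemma~\ref{lemma:whichroot} and the principal branch of $\cos^{-1}$. Next I would evaluate the first double integral $\int_\Gamma^B\int_0^{i_\gamma} z_\gamma(\alpha)\,d\alpha\,d\gamma$ and the second $\int_B^{\pi/2}\bigl(\tfrac{(\pi-\gamma-2e_\gamma)^2}{2}-e_\gamma^2+2\int_0^{e_\gamma}z_\gamma(\alpha)\,d\alpha\bigr)d\gamma$ by nested numerical quadrature (adaptive Gauss, Simpson, or simply a sufficiently fine Riemann sum). Substituting the results into the formula $\tfrac{7}{8}-\tfrac{6}{\pi^3}(\cdots)$ yields the desired probability, which I expect to round to $0.7867$.

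To make the computation honest rather than merely plausible, I would invoke the monotonicity chain established by the preceding error-estimate theorem: $\mu(N_\gamma)$ is monotone in $\gamma$ (by Lemma~\ref{lemma:decreasing_gamma}), and within each slice the inner integrand is monotone as well. Consequently left and right Riemann sums sandwich each integral, and refining the partition until $M-m<\epsilon$ (for, say, $\epsilon=10^{-4}$) certifies the two-decimal figure $78.67\%$. Using Sage's arbitrary-precision reals (or, more carefully, interval arithmetic via \texttt{RealIntervalField}) at every arithmetic step would propagate rigorous error bars through the chain $\Gamma,B\mapsto i_\gamma,e_\gamma\mapsto z_\gamma(\alpha)\mapsto$ inner integral $\mapsto$ outer integral.

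The main obstacle is not conceptual but numerical: ensuring stability of $z_\gamma(\alpha)$ near the corners $\alpha\to 0^+$, $\alpha\to i_\gamma^-$, and $\alpha\to e_\gamma^-$, where the discriminant $b^2-4ac$ in Lemma~\ref{lemma:whichroot} is small and cancellation can occur, and controlling the width of the interval enclosures so that they remain below $\epsilon$ after the $\cos^{-1}$ step. Once these are handled, the certified bounds together with Theorem~\ref{thm:probability} deliver the claimed $78.67\%$ value.
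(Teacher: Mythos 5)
Your proposal is correct and is essentially the paper's own argument: the authors likewise just plug the explicit formulas for $\Gamma$, $B$, $i_\gamma$, $e_\gamma$, and $z_\gamma(\alpha)$ into the formula of Theorem~\ref{thm:probability} and evaluate the two integrals numerically in Sage, relying on the preceding monotonicity-based error-estimate theorem to justify that the answer can in principle be certified. If anything, your plan (interval arithmetic plus sandwiching Riemann sums) is slightly more careful than the paper's actual code, which uses Gaussian quadrature and explicitly concedes that its error bounds are not guaranteed.
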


Since we know that the strong triangle inequality fails when $\gamma \geq \pi/2$, we could restrict our attention to triangles where $\gamma < \pi/2$. In this case, the inequality $a + b > c + h$ holds approximately 90\% of the time. For the Euclidean case, where $\alpha + \beta + \gamma = \pi$ and $\gamma < \pi/2$, it was shown in \cite{Fai-Pow-Sah-13} that the strong triangle inequality holds approximately 92\% of the time. Since the calculations in this paper involved volumes and the calculations in \cite{Fai-Pow-Sah-13} involved areas, it is hard to directly compare the hyperbolic and Euclidean probabilities of the strong triangle inequality. We can say, however, that in both planes the strong triangle inequality is  likely to hold.

\appendix

\section{Sage code}

The following code will visualize the value $a+b-c-h$ (referred as
``strength'') of a labelled triangle depending on the angles.  It generates
$2000$ pictures (or ``frames''), and each frame will correspond to a fixed
value of the angle $\gamma$, which grows throughout the frames from $0$ to
$\pi/2$. The number of frames is defined with the variable \texttt{number}.
For each frame, the strength is indicated for the angles $\alpha$, $\beta$, as
the color of a point in the $(\alpha,\beta)$ coordinate system.  Small positive
strength is indicated by blue colors, high positive strength is indicated by
red colors. The contours are changing from $0$ to $1$.  Negative strength will
be simply the darkest blue. To make the frames more informative, this darkest
blue color may be replaced by a distinctive color outside of Sage (e.g.\ using
Imagemagick).  A black square on the bottom left corner indicates the points
for which $\gamma$ is the greatest angle. Outside of this square, the strength
is proven to be positive.  The pictures are saved as numbered \texttt{png}
files.

\scriptsize
\begin{verbatim}
sage: def strength(al,be,ga): #this is a+b-c-h
...       cha=(cos(be)*cos(ga)+cos(al))/(sin(be)*sin(ga))
...       chb=(cos(al)*cos(ga)+cos(be))/(sin(al)*sin(ga))
...       chc=(cos(al)*cos(be)+cos(ga))/(sin(al)*sin(be))
...       a=arccosh(cha)
...       b=arccosh(chb)
...       c=arccosh(chc)
...       shb=sqrt(chb^2-1)
...       shh=shb*sin(al)
...       h=arcsinh(shh)
...       expression=a+b-c-h
...       return expression
...       
sage: def defect(al,be,ga): return pi-al-be-ga
...
sage: var("al be ga")
sage: con=[]
sage: for i in xrange(50): con.append(i/50)
sage: map=sage.plot.colors.get_cmap('coolwarm')
sage: number=2000
sage: for i in xrange(number):
...       gamma=(i+1)*(pi/2)/(number)
...       p=contour_plot(strength(al,be,ga=gamma),(al,0,pi),(be,0,pi),
...           contours=con,cmap=map,plot_points=1000,
...           figsize=[10,10],region=defect(al,be,ga=gamma))
...       p+=line([(0,pi-gamma),(pi-gamma,0)],color='black')
...       p+=line([(0,gamma),(min(pi-2*gamma,gamma),gamma)],color='black')
...       p+=line([(gamma,0),(gamma,min(pi-2*gamma,gamma))],color='black')
...       p+=text("$\\gamma=$"+str(float(gamma)),(2.5,3),
...           vertical_alignment='top',horizontal_alignment='left')
...       p.save('hyper'+str(i).zfill(4)+'.png')

\end{verbatim}

\normalsize
A video generated by this code can be found at
\url{http://www.math.louisville.edu/~biro/movies/sti.mp4}. In this video,
negative strength is represented by the color green. To generate the video, the
following commands were executed in Bash (Linux Mint 17.1, ImageMagick and
libav-tools installed). The reason of cropping in the second line is that the
default mp4 encoder for avconv (libx264) requires even height and width.

\scriptsize
\begin{verbatim}
for i in hyper*.png; do convert $i -fill green -opaque "#3b4cc0" x$i; done
avconv -i xhyper%04d.png -r 25 -vf "crop=2*trunc(iw/2):2*trunc(ih/2):0:0" -b:v 500k sti.mp4
\end{verbatim}

\normalsize
The following code performs the numerical computation of the integral. We are
trying to follow the paper as close as possible, including notations. Note that
the numerical integration is performed by Gaussian quadrature, so error bounds
are not guaranteed in this code. We use the \texttt{mpmath} package and we
store 100 decimal digits.

\scriptsize
\begin{verbatim}
sage: from mpmath import *
sage: mp.dps=100
sage: Gamma=findroot(lambda x: -1-cos(x)+sin(x)+sin(x/2)*sin(x),1.15)
sage: Beta=atan(24/7)
...       
sage: def i(gamma):
...       return acos(((sin(gamma)-1)^2+cos(gamma))/(2*sin(gamma)-cos(gamma)-1))
...       #return z(gamma,0) #This should give the same result
...
sage: def e(gamma):
...       D=tan(gamma/2)-3/4
...       if D<0:
...           sol=1/2
...       else:
...           sol=1/2-sqrt(D)
...       return 2*atan(sol)
...           
sage: def z(gamma,alpha):
...       denominator=cos(gamma)+1-sin(gamma)
...       a=csc(gamma)^2-(cos(alpha)/denominator)^2
...       b=cos(alpha)*(cos(gamma)+1)/sin(gamma)^2
...       c=(cos(alpha)/sin(gamma))^2-((sin(gamma)-1)/denominator)^2
...       d=b^2-4*a*c
...       if d>=0:
...           sol=(-b-sqrt(d))/(2*a)
...       else:
...           sol=-b/(2*a)
...       if sol>1 or sol<-1:
...           result=0
...       else:
...           result=min(acos(sol),pi-alpha-gamma)
...       return result
...       
sage: f = lambda gamma: quad(lambda alpha: z(gamma,alpha),[0,i(gamma)])
sage: g = lambda gamma: (pi-gamma-2*e(gamma))^2/2-e(gamma)^2+
....      2*quad(lambda alpha: z(gamma,alpha),[0,e(gamma)])
sage: int1=quad(f,[Gamma,Beta])
sage: int2=quad(g,[Beta,pi/2])
sage: print "Probability:", 7/8-(6/pi^3)*(int1+int2)
\end{verbatim}

\bibliographystyle{amsplain}
\bibliography{bib}

\end{document}